\pgfplotsset{compat=1.8}
\definecolor{clr1}{RGB}{27,158,119}
\definecolor{clr2}{RGB}{217,95,2}
\definecolor{clr3}{RGB}{117,112,179}
\definecolor{clr4}{RGB}{231,41,138}
\definecolor{clr5}{RGB}{102,166,30}
\definecolor{clr6}{RGB}{230,171,2}
\definecolor{clr7}{RGB}{166,118,29}
\pgfplotsset{
    cycle list={clr1,clr2,clr3,clr4,clr5,clr6,clr7},
}
\newtheorem{property}{Property}
\newtheorem{remark}{Remark}
\newtheorem{theorem}{Theorem}
\newtheorem{conjecture}{Conjecture}
\newtheorem{lemma}{Lemma}
\newtheorem{example}{Example}
\newtheorem{definition}{Definition}
\newtheorem{proposition}{Proposition}
\journal{SIGMETRICS 2021}
\begin{document}

\begin{frontmatter}

%% Title, authors and addresses

%% use the tnoteref command within \title for footnotes;
%% use the tnotetext command for theassociated footnote;
%% use the fnref command within \author or \address for footnotes;
%% use the fntext command for theassociated footnote;
%% use the corref command within \author for corresponding author footnotes;
%% use the cortext command for theassociated footnote;
%% use the ead command for the email address,
%% and the form \ead[url] for the home page:
%% \title{Title\tnoteref{label1}}
%% \tnotetext[label1]{}
%% \author{Name\corref{cor1}\fnref{label2}}
%% \ead{email address}
%% \ead[url]{home page}
%% \fntext[label2]{}
%% \cortext[cor1]{}
%% \address{Address\fnref{label3}}
%% \fntext[label3]{}

\title{Achievable Stability in Redundancy Systems}

%% use optional labels to link authors explicitly to addresses:
%% \author[label1,label2]{}
%% \address[label1]{}
%% \address[label2]{}

\author[label1]{Youri Raaijmakers\corref{cor1}}
\ead{y.raaijmakers@tue.nl}
\author[label1]{Sem Borst}

\address[label1]{Department of Mathematics and Computer Science, Eindhoven University of Technology, 5600 MB Eindhoven, The Netherlands}
\cortext[cor1]{Corresponding author}

\begin{abstract}
We consider a system with $N$~parallel servers where incoming jobs are immediately replicated to, say, $d$~servers.  Each of the $N$ servers has its own queue and follows a FCFS discipline.  As soon as the first job replica is completed, the remaining replicas are abandoned.  We investigate the achievable stability region for a quite general workload model with different job types and heterogeneous servers, reflecting job-server affinity relations which may arise from data locality issues and soft compatibility constraints.
Under the assumption that job types are known beforehand we show for New-Better-than-Used (NBU) distributed speed variations that no replication $(d=1)$ gives a strictly larger stability region than replication $(d>1)$. Strikingly, this does not depend on the underlying distribution of the intrinsic job sizes, but observing the job types is essential for this statement to hold. In case of non-observable job types we show that for New-Worse-than-Used (NWU) distributed speed variations full replication ($d=N$) gives a larger stability region than no replication $(d=1)$.
\end{abstract}

\begin{keyword}
%% keywords here, in the form: keyword \sep keyword
Parallel-server system \sep redundancy \sep stability
%% PACS codes here, in the form: \PACS code \sep code

%% MSC codes here, in the form: \MSC code \sep code
%% or \MSC[2008] code \sep code (2000 is the default)

\end{keyword}

\end{frontmatter}

%% \linenumbers

%% main text

\section{Introduction}
\label{sec: introduction}
Redundancy scheduling has attracted strong interest as a mechanism to improve the delay performance in parallel-server systems. In redundancy scheduling an incoming job is replicated and dispatched to $d$ different servers and as soon as the first of the $d$ replicas finishes service the remaining replicas are abandoned ('cancel-on-completion' c.o.c.). Adding replicas increases the chance for one of the replicas to find a short queue, thus reducing the latency. On the other hand, adding replicas could cause instability since the same job may be in service at multiple servers, potentially wasting service capacity. Establishing the stability condition is not straightforward since the various replicas may have started service at different times. 
Among the numerous studies on redundancy scheduling, stability results have remained scarce so far.
  
\textit{Gardner et al.} \cite{GHBSWVZ-PDR} introduce the redundancy-$d$ system and obtain analytical expressions for the expected number of jobs in the system in the scenario with uniform selection of the servers, exponential job sizes, i.i.d.\ replicas and homogeneous servers, i.e., the server speeds of all servers are equal. From the expressions it is concluded that in this scenario more redundancy is always better for the expected latency. In terms of stability their main result is that the stability condition of the c.o.c.\ version of redundancy scheduling is $\lambda / N \mu < 1$, where $\lambda$ denotes the arrival rate and the job sizes are exponentially distributed with parameter $\mu$. Note that this stability condition is independent of the number of replicas. 

In \cite{RBB-RSSB} it is shown that in the same scenario with scaled Bernoulli job sizes the stability condition is asymptotically given by $\lambda / K^{d-1}< 1$ as $K \rightarrow \infty$. Here the job size is either $0$ or $K$ with probability $1-1/K$ and $1/K$, respectively. Observe that this stability condition is asymptotically independent of the number of servers. 

The contrasting results in~\cite{GHBSWVZ-PDR} and~\cite{RBB-RSSB} indicate that the stability condition is highly sensitive to the job size distribution, and for general job size distributions the stability condition remains unknown. 
For a discrete-time system with Bernoulli arrivals a lower bound is proved in \cite{M-LBSR}. While the bound is not always tight, it provides a first result for the necessary stability condition that depends on the number of servers, the number of replicas and the joint distribution of the job sizes. 

\textit{Anton et al.} \cite{AAJV-OSR} investigate the stability condition in the scenario of homogeneous servers and exponential job sizes for different service disciplines at each individual server, such as processor sharing, FCFS and random order of service. For FCFS and identical replicas they prove an implicit stability condition. Namely, the system is stable if $\lambda/\bar{l} \mu < 1$ and unstable if $\lambda/\bar{l} \mu > 1$, where $\bar{l}$ is the long-run average number of jobs served in the saturated system, i.e., the system with an infinite backlog of jobs. Finding a closed-form expression for $\bar{l}$ remains an open problem. They also explore the stability condition for heterogeneous server speeds by simulation, showing that heterogeneity in server speed has a profound impact on the stability condition.
 
\textit{Gardner et al.} \cite{GHBSW-DSSJS} study the same scenario in the S\&X model, where the server speeds (slowdown factors) at the various servers are independent and identically distributed. No analytical expression is obtained for either the expected latency or the stability condition. However, simulation shows that for more variable job size distributions, more redundancy at first decreases the expected latency, but then hurts badly. The system can even become unstable if the number of replicas $d$ is too high. A dispatching policy 'Redundancy-to-Idle-Queue' (RIQ) that only replicates the job to idle servers is introduced to overcome this problem. Highly accurate approximations for both the expected latency and the transform of the latency are derived. It is proved that, in contrast to redundancy-$d$ scheduling, the RIQ policy cannot become unstable as the number of replicas increases. Stability aspects of redundancy scheduling in a many-server regime are discussed in \cite{HBH-PAWDLB,HH-PR}. For a recent summary of exact stability condition results we refer to \cite[Table~1]{RBB-SRPS}.

Further work has focused on comparing the stability conditions and showing that either no replication ($d=1$) or full replication $(d=N)$ is optimal in the scenario of i.i.d.\ replicas and homogeneous server speeds.  
In \cite{KR-RAGC} it is proved that full replication stochastically maximizes the number of jobs completed jointly across time for NWU job size distributions. No replication is shown to be optimal for two servers and NBU job size distributions, see Definition~\ref{def: NBU NWU distribution} below for the definition of NBU and NWU distributions. In \cite{KRW-JRMS} these results are generalized and it is proved, by a combinatorial argument, that no replication and full replication give the largest stability region for NBU and NWU job sizes, respectively. 
In \cite{J-ERT} these results are extended to log-concave and respectively, log-convex complementary cumulative distribution functions. Note that log-concavity and log-convexity imply NBU and NWU, respectively, but the converse is not true. 

In \cite{WJW-ESR} the single fork-join policy is analyzed. This policy launches $n$ tasks and waits until $(1-p)n$ tasks are finished. For the remaining $p n$ straggling tasks there are two options: either replicate and keep the original task or replicate and kill the original task. Under the assumption that there is no queueing of the tasks it is proved that for NBU distributions keeping the original task gives lower latency while for NWU distributions killing the original task gives lower latency. The effect of replication in the fork-join model is also analyzed in \cite{PC-CER}. Different strategies, such as no replication, full replication or partial replication, are shown to perform better depending on the job size distribution.
In \cite{SKS-DOS} a scheduling policy, called fewest unassigned tasks first with low-priority replication, is proposed in case of an NBU distribution, while the earliest due date first with replication policy is proposed for an NWU distribution. 

In this paper we investigate the achievable stability region for c.o.c.\ redundancy systems in a quite general workload model, as considered earlier in \cite{RBB-DPP}, with multiple job types and servers that follow a FCFS discipline. Replicas may be assigned to the servers according to static type-dependent probabilities, instead of uniformly at random. Additionally, we deal with the complex dynamics arising from potentially different start times as a result of queueing which may occur when servers are not partitioned in disjoint pools of $d$ servers. Specifically, we allow for generally distributed job sizes and the server speeds (slowdown factors) for a given job type are allowed to be inter-dependent and non-identically distributed, reflecting job-server affinity relations which may arise from data locality issues and soft compatibility constraints that are increasingly prevalent in data center environments.
This workload model also subsumes the S\&X model introduced in \cite{GHBSW-DSSJS}.

The general nature of the workload model reveals that the optimal degree of replication is not determined by the distribution of the intrinsic job sizes, but rather by the random variation in service speeds (or slow down factors) for a given job across the various servers. Also, our set-up with different job types and heterogeneous servers separates purely random variation in speeds across servers from systematic differences induced by job-server affinity relations. In particular, our results are the first to demonstrate that when job types are not explicitly observable, this uncertainty plays a similar role as purely random variation in speeds, and creates a potentially strong incentive for replication, even when the speeds for a job of a given type show little or no variation at all. Conversely, if there is little or no random variation in speeds, and the variability primarily arises from fundamental heterogeneity in job characteristics that can be observed beforehand, then replication provides no gains from a stability perspective.

The remainder of the paper is organized as follows. In Section \ref{sec: model description} we present a detailed model description and some preliminary results. In Sections~\ref{sec: stability NBU distribution} and~\ref{sec: stability NWU distribution} we state and prove the main theorems for NBU and NWU distributed speed variations, respectively. Section \ref{sec: conclusion} contains conclusions and some suggestions for further research. 

\section{Model description and preliminary results}
\label{sec: model description}

Consider a system with $N$ parallel servers where jobs arrive as a Poisson process of rate $\lambda$. Each of the $N$ servers has its own queue and follows a FCFS discipline. When a job arrives, multiple replicas may be assigned to one or more servers according to static type-dependent probabilities. 
A special case of such a static probabilistic assignment is the celebrated \textit{power-of-$d$} policy, where replicas are assigned to $d$ servers selected uniformly at random (without replacement), which is the prevalent case considered in the literature. 

In case multiple replicas are assigned, the service speeds $R_{1},\dots,R_{N}$ for that job on the various servers may differ. We allow the service speeds $R_1, \dots, R_{N}$ of a generic job to be governed by some joint distribution $F(r_1, \dots, r_{N})$, reflecting possible server heterogeneity and job-server compatibility relations. For convenience, we consider the case where the joint distribution $F(r_1, \dots, r_{N})$ is discrete, and has mass in a finite number of say~$M$ points $(r_{1j},...,r_{Nj})$ with corresponding probabilities $p_{j}$, $j= 1, \dots,M$. This system may equivalently be thought of as having $M$ job types, where $r_{ij}$ is the service speed of type-$j$ jobs at the $i$-th server. 
For notational convenience let $\mathcal{N} = \{1,2,\dots,N\}$ denote the set of servers and $\mathcal{M}=\{1,2,\dots,M\}$ denote the set of job types. 
 
The intrinsic size of a type-$j$ job is denoted by a generic random variable $X_{j}$. Moreover, letting $Y_{ij}$ denote the random speed variation, we assume that $Y_{1j},\dots,Y_{Nj}$ are i.i.d.\ copies of some generic random variable $S_{j}$. These latter variables can be thought of as job sizes in the standard independent runtime model (taking $X_{j}=1$ and $M=1$ job types) or slowdown factors in the S\&X model \cite{GHBSW-DSSJS} (taking $R_{i}=1$ and $M=1$ job types). For a particular job on server $i$, $i=1,\dots,N$, with intrinsic size $x_{j}$, $(x_{j} Y_{ij})/R_{i}$ represents the processing time. We distinguish two cases: i) no random speed variation for all job types, i.e., $S_{j} \equiv c_{j}$ with $c_{j} \in \mathbb{R}_{+}$ for $j=1,\dots,M$, so-called identical replicas, ii) random speed variation for all job types and servers, so-called i.i.d.\ replicas. 

In the remainder of the paper we distinguish between two scenarios referred to as \textit{Known job types} and \textit{Unknown job types}. In both scenarios the design of the assignment policy may involve knowledge of the type probabilities $p_j$ and service speeds $r_{ij}$. In the Known job types case, the dispatcher can additionally observe the type identity of each job, and thus knows its service speed at each of the servers.  In contrast, in the Unknown job types case, the dispatcher cannot identify jobs by type, and thus has no advance knowledge of service speeds of individual jobs.

\subsection{Preliminaries}
\label{sec: preliminaries}
Let $\tilde{p}_{ij}$ denote the proportion of type-$j$ jobs that are assigned to server $i$. 
For given $\tilde{p}_{ij}$, the stability condition for $d=1$ and known job types, see also \cite{HL-RPPS,S-ORSS}, is given by $\lambda \sum_{j=1}^{M} \tilde{p}_{ij} p_{j}  \frac{\mathbb{E}[X_{j}]\mathbb{E}[S_{j}]}{r_{ij}} < 1$ for all $i=1,\dots,N$. 
Thus, the achievable stability region is
\begin{align}
\Lambda_{K} &= \Bigg\{ \lambda \geq 0 | \exists \tilde{p}_{ij} \geq 0 : \lambda \sum_{j=1}^{M} \tilde{p}_{ij} p_{j}  \frac{\mathbb{E}[X_{j}]\mathbb{E}[S_{j}]}{r_{ij}} < 1 \text{ for all } i \in \mathcal{N}, \sum_{i=1}^{N}\tilde{p}_{ij} = 1 \text{ for all } j \in \mathcal{M} \Bigg\},
\label{eq: stability condition d=1 job types}
\end{align}
where the subscript $K$ refers to the case of \textit{known} job types. Note that the stability region given by Equation~\eqref{eq: stability condition d=1 job types} only depends on the distribution of $S_{j}$ through its mean $\mathbb{E}[S_{j}]$ since there is no replication.

Now we proceed with the case of \textit{unknown} job types. Let $\tilde{p}_{i}$ denote the proportion of jobs assigned to server $i$, which must be common to all job types when these cannot be distinguished.
For given $\tilde{p}_{i}$, the stability condition for $d=1$ is then given by $\sum_{j=1}^{M}  \lambda \tilde{p}_{i} p_{j} \frac{\mathbb{E}[X_{j}]\mathbb{E}[S_{j}]}{r_{ij}} < 1$ for all $i=1,\dots,N$.
Thus, the achievable stability region is
\begin{align}
\Lambda_{U} &= \Bigg\{ \lambda \geq 0 | \exists \tilde{p}_{i} \geq 0 :\sum_{j=1}^{M}  \lambda \tilde{p}_{i} p_{j} \frac{\mathbb{E}[X_{j}]\mathbb{E}[S_{j}]}{r_{ij}} < 1 \text{ for all } i \in \mathcal{N}, \tilde{p}_{1} + \dots + \tilde{p}_{N}=1 \Bigg\},
\label{eq: stability condition d=1 no job types}
\end{align}
where the subscript $U$ refers to the case of \textit{unknown} job types.

The stability region for $d=N$ is also known since the system then behaves as an $M/G/1$ system, see for example~\cite{AAJV-OSR},
\begin{align}
\Lambda = [0,\lambda^{*}),
\label{eq: stability condition d=N no job types}
\end{align}
with $\lambda^{*} = \left( \sum_{j=1}^{M} p_{j} \mathbb{E}[X_{j}] \mathbb{E}[\min \{\frac{Y_{1j}}{r_{1j}},\dots,\frac{Y_{Nj}}{r_{Nj}} \} \right)^{-1}$.
Note that $\Lambda$ needs no subscript since the stability region is the same in the cases of known and unknown job types.

In the case of generally distributed job sizes, the next example shows that there is a scenario in which the stability region for $d=1$ is strictly larger than for $d=N=2$, both for identical and i.i.d.\ replicas.

\begin{example}
\label{exam: slow fast servers}
Consider the scenario with $N=2$, $M=2$ and server speeds $(r_{11},r_{21})=(1,x)$ and $(r_{12},r_{22})=(x,1)$ with probabilities $p_{1}=p_{2}=0.5$, where $x<1$. In case of $d=1$ the optimal static probabilistic assignment is $\tilde{p}_{11}=\tilde{p}_{22}=1$ and $\tilde{p}_{12}=\tilde{p}_{21}=0$. Thus, the stability conditions are
\begin{align*}
&\lambda \sum_{j=1}^{2} p_{j} \mathbb{E}[X_{j}]\mathbb{E}[S_{j}] < 1, &\text{ for } d=2 \text{ (identical)},\\
&\lambda \sum_{j=1}^{2} p_{j} \mathbb{E}[X_{j}] \mathbb{E}\left[\min\left\{Y_{1j},\tfrac{Y_{2j}}{x}\right\}\right] < 1, &\text{ for } d=2 \text{ (i.i.d.)},\\
&\lambda \cdot \frac{1}{2} \cdot \mathbb{E}[X_{j}]\mathbb{E}[S_{j}] < 1,~~~j=1,2 &\text{ for } d=1,
\end{align*}
where $Y_{1j}$ and $Y_{2j}$ are i.i.d.\ copies of $S_{j}$. Moreover observe that
\begin{align*}
\lim_{x \downarrow 0} \mathbb{E}\left[\min\left\{Y_{1j},\frac{Y_{2j}}{x}\right\}\right] = \mathbb{E}\left[\lim_{x \downarrow 0} \min\left\{Y_{1j},\frac{Y_{2j}}{x}\right\}\right] = \mathbb{E}[S_{j}],
\end{align*}
for every distribution of the speed variation. Thus, if $\mathbb{E}[X_{j}]\mathbb{E}[S_{j}]=1$ for $j=1,2$, then the stability condition for $d=1$ is given by $\lambda<2$ while for $d=2$ it is given by $\lambda<1$.
\end{example}

\begin{definition}
\label{def: NBU NWU distribution}
Consider a non-negative random variable $S$ with support denoted by $\mathcal{R}_{S}$ and cumulative distribution function (cdf) $F_{S}(x)$. Let $\bar{F}_{S}(x) = 1-F_{S}(x)$ denote the complementary cumulative distribution function (ccdf). Then, $S$ is New-Better-than-Used (NBU) if for all $t_{1}, t_{2} \in \mathcal{R}_{S}$,
\begin{align}
\label{eq: NBU definition}
\bar{F}_{S}(t_{1}+t_{2}) \leq \bar{F}_{S}(t_{1}) \bar{F}_{S}(t_{2}).
\end{align}
On the other hand, $S$ is New-Worse-than-Used (NWU) if for all $t_{1}, t_{2} \in \mathcal{R}_{S}$,
\begin{align}
\label{eq: NWU definition}
\bar{F}_{S}(t_{1}+t_{2}) \geq \bar{F}_{S}(t_{1}) \bar{F}_{S}(t_{2}).
\end{align}
Moreover, $S$ is strictly NBU or strictly NWU when Equation \eqref{eq: NBU definition} or \eqref{eq: NWU definition} holds with strict inequality, respectively, for all values $t_{1},t_{2} \in \mathcal{R}_{S}\setminus \{0\}$. 
\end{definition}

In the case of strictly NWU distributed speed variations, the next example shows that there is a scenario in which the stability region for $d=N=2$ is strictly larger than for $d=1$.

\begin{example}
\label{exam: homogeneous servers}
Consider the scenario with $N=2$, $M=1$ and server speeds $(r_{11},r_{21})=(1,1)$ with probability $p_{1}=1$. The stability conditions for $d=1$ and $d=2$ are
\begin{align*}
&\lambda \cdot \frac{1}{2} \cdot \mathbb{E}[X]\mathbb{E}[S] < 1, &\text{ for } d=1,\\
&\lambda \mathbb{E}[X] \mathbb{E}[\min \{ Y_{1}, Y_{2} \} ] < 1, &\text{ for } d=2 \text{ (i.i.d.)},
\end{align*}
where $Y_{1}$ and $Y_{2}$ are i.i.d.\ copies of $S$. Moreover, by definition of strictly NWU, see for example~\cite[Sec.~1.6]{S-CMQST},
\begin{align*}
\mathbb{E}[\min \{ Y_{1}, Y_{2} \} ] < \frac{1}{2} \mathbb{E}[S].
\end{align*}

Thus the stability region for $d=N=2$ is strictly larger than the stability region for $d=1$ in this example.
\end{example}

Observe that $G_{j}(d) = d \mathbb{E}[\min \{Y_{1j},\dots,Y_{dj}\}]$ is increasing and decreasing in $d$ for NBU and NWU distributions, respectively, see for example~\cite[Sec.~1.6]{S-CMQST}. 
Here, $G_{j}(d)$ may be interpreted as the aggregate resource usage for $d$ replicas with equal start times on homogeneous servers under redundancy c.o.c., and has emerged as a key metric for stability conditions in scenarios where the servers are partitioned in disjoint pools of $d$ servers, see for instance \cite{J-ERT}.
We will extend this notion to scenarios with heterogeneous servers and additionally deal with the complex dynamics arising from potentially different start times as a result of queueing which may occur when servers are not partitioned in the above manner.

In the proofs of the main theorems in the next section a property of c.o.c.\ redundancy systems, viz., Property~\ref{prop: oldest job} below, is needed. Note that this property is valid for all scenarios. 
\begin{property}
\label{prop: oldest job}
The oldest job in the system is served at all servers that it has been replicated to.
\end{property}

\section{No replication is best for NBU speed variations}
\label{sec: stability NBU distribution}
In this section we prove that no replication maximizes stability when the speed variations are NBU distributed, see Theorem~\ref{thm: maximum stability d=1 iid NBU}. First however we consider the special case where the speed variation of each job type $j$, $S_{j}$, follows a degenerate distribution, see Theorem~\ref{thm: maximum stability d=1 identical}. The proof of this latter theorem is simpler and gives intuition for the general case with NBU distributed speed variations. Both theorems rely on the next proposition.

\begin{proposition}
\label{propo: prob allocation}
Assume that $r_{ij} > 0$ for all $i = 1, \dots, N$, $j = 1, \dots, M$,
and that the system is stable under a given assignment policy with $d > 1$
for some arrival rate $\lambda_0 > 0$.
Let $\tau_{ij}$ be the long-term fraction of time that server~$i$ spends on type-$j$ jobs under this assignment policy with $d>1$.
Suppose that
\begin{align}
\sum_{i=1}^{N} r_{ij} \tau_{ij} \geq \lambda_{0} p_j \mathbb{E}[X_j] \mathbb{E}[S_j],
\label{ineqpertype}
\end{align}
for all $j =1, \dots, M$, and in addition
\begin{align}
\sum_{j=1}^{M} \sum_{i=1}^{N} r_{ij} \tau_{ij} \geq \lambda_0 (1+\epsilon) \sum_{j=1}^{M} p_j \mathbb{E}[X_j] \mathbb{E}[S_j]
\label{ineqtotal}
\end{align}
for some $\epsilon \geq 0$.
Then the system can be stabilized through a suitable probabilistic assignment policy with $d = 1$ for all $\lambda \leq \lambda_0 (1 + \kappa \epsilon)$,
with $\kappa > 0$ a fixed constant bounded away from zero, independent of~$\lambda_0$.
\end{proposition}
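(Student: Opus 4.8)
The plan is to convert the assumed stable $d>1$ policy into a type-dependent probabilistic $d=1$ assignment and then read off stability directly from the characterization of $\Lambda_K$ in \eqref{eq: stability condition d=1 job types}. Throughout I would write $w_j := p_j\mathbb{E}[X_j]\mathbb{E}[S_j]$ and $u_{ij} := r_{ij}\tau_{ij}$, so that $u_{ij}$ is the long-run rate at which server~$i$ performs useful work on type-$j$ jobs under the $d>1$ policy. Two elementary facts drive the argument: since the time server~$i$ devotes to distinct types cannot overlap, $\sum_{j=1}^{M}\tau_{ij}\le 1$ for every~$i$; and assumptions \eqref{ineqpertype} and \eqref{ineqtotal} read $\sum_i u_{ij}\ge\lambda_0 w_j$ and $\sum_{i,j}u_{ij}\ge\lambda_0(1+\epsilon)\sum_j w_j$. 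Setting $E_j := \sum_i u_{ij}-\lambda_0 w_j\ge 0$, the total \emph{excess} work then satisfies $\sum_j E_j\ge\lambda_0\epsilon\sum_j w_j$; this excess is precisely the redundant work a $d>1$ policy performs but a $d=1$ policy need not.

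First I would establish the baseline $\lambda<\lambda_0$ using only \eqref{ineqpertype}. Assign type-$j$ jobs in proportion to the useful work done on them, i.e.\ take $\tilde p_{ij} := u_{ij}/\sum_{i'}u_{i'j}$, which is well defined since $\sum_{i'}u_{i'j}\ge\lambda_0 w_j>0$ and satisfies $\sum_i\tilde p_{ij}=1$. The resulting load on server~$i$ at arrival rate $\lambda$ is
\begin{align*}
\lambda\sum_{j} \tilde p_{ij}\frac{w_j}{r_{ij}}
=\lambda\sum_{j}\frac{w_j}{\sum_{i'}u_{i'j}}\,\tau_{ij}
\le\frac{\lambda}{\lambda_0}\sum_{j}\tau_{ij}
\le\frac{\lambda}{\lambda_0},
\end{align*}
using $\sum_{i'}u_{i'j}\ge\lambda_0 w_j$ and $\sum_j\tau_{ij}\le1$. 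Hence every load is strictly below~$1$ whenever $\lambda<\lambda_0$, so $[0,\lambda_0)\subseteq\Lambda_K$.

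To gain the factor $(1+\kappa\epsilon)$ I would exploit \eqref{ineqtotal}. The proportional assignment by itself does not suffice, because its bottleneck server may carry only types with $E_j=0$, so its load reaches exactly~$1$ and no headroom is created. The key observation is that the excess $\sum_j E_j\ge\lambda_0\epsilon\sum_j w_j$ is genuine slack: a type~$j$ with $E_j>0$ is over-served, so on every server handling it the proportional load falls strictly below the time that server occupied under the $d>1$ policy, producing headroom of order~$\epsilon$. I would then raise the rate to $\lambda=\lambda_0(1+\kappa\epsilon)$ and route the extra per-type demand $\kappa\epsilon\lambda_0 w_j$ onto these under-loaded servers, peeling off work from any server whose load would otherwise reach~$1$. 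Because $r_{ij}>0$ for all $i,j$, a unit of type-$j$ work can be shifted to \emph{any} server at a cost of at most $1/\min_{i,j}r_{ij}$ in load; thus order-$\epsilon$ headroom absorbs order-$\epsilon$ extra demand, and choosing $\kappa$ small enough keeps every load below~$1$. Quantifying this yields an explicit $\kappa$ depending only on the speeds $r_{ij}$ (through $\min_{i,j}r_{ij}$ and the speed ratios) and the weights $w_j$, but not on $\lambda_0$, as required.

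The main obstacle is exactly this redistribution step, and specifically the fact that the excess capacity need not sit where it is needed: the over-served types and the servers carrying the eventual $d=1$ bottleneck may be entirely disjoint. The resolution — and the only place the hypothesis $r_{ij}>0$ is essential — is that positivity of all service speeds makes every server a (possibly inefficient) option for every type, so the order-$\epsilon$ headroom can always be transferred to relieve the bottleneck at a bounded, $\lambda_0$-independent cost. Making the bookkeeping of this transfer precise, while verifying that the resulting $\kappa$ stays bounded away from zero uniformly in $\lambda_0$, is the crux; the remaining steps are the routine load computations above.
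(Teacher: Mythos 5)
Your baseline step (the case $\epsilon=0$) is exactly the paper's construction: the type-dependent probabilities $\tilde p_{ij}=r_{ij}\tau_{ij}/\sum_{k}r_{kj}\tau_{kj}$ and the same one-line load computation. One small improvement you should make there: the paper uses the \emph{strict} inequality $\sum_{j}\tau_{ij}<1$, which follows from stability of the $d>1$ system, so that the load is strictly below one even at $\lambda=\lambda_0$; you only invoke $\sum_j\tau_{ij}\le 1$ and therefore only obtain the open interval $[0,\lambda_0)$, whereas the statement requires all $\lambda\le\lambda_0(1+\kappa\epsilon)$.

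The real issue is the case $\epsilon>0$, which is the substantive content of the proposition, and there your proposal stops at a plan: you yourself write that making the redistribution precise and verifying that $\kappa$ is uniform in $\lambda_0$ ``is the crux.'' That crux is not routine bookkeeping in the form you describe (raising the rate and ``peeling off work from any server whose load would otherwise reach~1''), and the obstacle you worry about --- that the over-served types and the eventual bottleneck server may be disjoint --- is a symptom of attacking it that way. The paper's resolution makes the accounting entirely \emph{local}: with $\sigma_j=\lambda_0 p_j\mathbb{E}[X_j]\mathbb{E}[S_j]/\sum_i r_{ij}\tau_{ij}\le 1$, the first-stage assignment at rate $\lambda_0$ loads server~$i$ only to $\sum_j\sigma_j\tau_{ij}$, so each server automatically retains its own slack $\Delta\tau_i=\sum_j(1-\sigma_j)\tau_{ij}$. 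A second, \emph{type-independent} stream of rate $\Delta\lambda$ is then routed to server~$i$ with probability $\hat p_i\propto r_i\Delta\tau_i$, where $r_i$ is the harmonic-mean speed of server~$i$ over the nominal type mix; this adds exactly $\Delta\tau_i$ to server~$i$'s load, bringing the total back to $\sum_j\tau_{ij}<1$ with no cross-server transfer at all. Comparing $\sum_j\Delta_j\ge\lambda_0\epsilon\sum_j p_j\mathbb{E}[X_j]\mathbb{E}[S_j]$ with $\sum_j\Delta_j\le\sum_i\Delta\tau_i\max_j r_{ij}$ then gives $\Delta\lambda\ge\kappa\epsilon\lambda_0$ with the explicit constant $\kappa=\min_i r_i/\max_{i,j}r_{ij}>0$, which depends only on the speeds. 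Without this (or an equivalent) explicit construction and constant, the quantitative claim $\lambda\le\lambda_0(1+\kappa\epsilon)$ with $\kappa$ bounded away from zero is not established.
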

\begin{proof}
The high-level idea of the proof may be outlined as follows.
The inequalities in~\eqref{ineqpertype} imply that the weighted fraction of time that the servers collectively spend on type-$j$ jobs under the policy with $d > 1$ is no less than the offered load of type-$j$ jobs, i.e., what this weighted fraction would be without any replication, for each job type $j = 1, \dots, M$.
This allows us to distribute the type-$j$ jobs without any replication through suitable assignment probabilities $\tilde{p}_{ij}$ in terms of the $r_{ij}$ and $\tau_{ij}$ to sustain the same arrival rate~$\lambda_0$ without increasing the load of any of the servers, thus ensuring stability.
Hence, the statement of the proposition follows when $\epsilon = 0$.
When $\epsilon > 0$, the inequality~\eqref{ineqtotal} implies that the total weighted amount of time that the servers are collectively occupied under the policy with $d > 1$ is strictly larger than the total offered load.
This margin reflects that there is some slack capacity that can be freed up when refraining from replication, and thus be exploited to accommodate a higher arrival rate than~$\lambda_0$.
While there are several options for dividing the slack capacity, we will simply use assignment probabilities that account for the amount of slack at each server and its speeds for the various job types, but do not depend on the job type.
Once again, this will not increase the load of any of the servers, but allow us to support a strictly higher arrival rate.

In order to develop the proof in greater detail, observe that the stability under the given assignment policy with $d > 1$ implies that the long-term fraction of time that each server is busy must be strictly less than unity, i.e., $\sum_{j=1}^{M} \tau_{ij} < 1$ for all $i = 1, \dots, N$.
(For transparency, we tacitly assume here and in the statement of the proposition that these long-term fractions exist, and thus implicitly rule out possibly eccentric (e.g. non-stationary) assignment policies.
The proof arguments below could however readily be extended to cover such policies as well, if we stipulate stability to mean that the limsup values of $\sum_{j=1}^{M} \tau_{ij}$ must be strictly less than unity for all $i = 1, \dots, N$.)

Now consider the system with $d=1$ and assignment probabilities
\begin{align*}
\tilde{p}_{ij} = \frac{r_{ij} \tau_{ij}}{\sum_{k=1}^{N} r_{kj} \tau_{kj}}.
\end{align*}

Then each server behaves as a multi-class $M/G/1$ queue, and for an overall arrival rate $\lambda \leq \lambda_0$ the load on server~$i$ is
\begin{align*}
\lambda \sum_{j=1}^{M} p_j \tilde{p}_{ij} \frac{\mathbb{E}[X_j] \mathbb{E}[S_{j}]}{r_{ij}}
= \lambda \sum_{j=1}^{M} p_j \tau_{ij} \frac{\mathbb{E}[X_j] \mathbb{E}[S_{j}]}{\sum_{k=1}^{N} r_{kj} \tau_{kj}}
\leq \sum_{j=1}^{M} \tau_{ij} < 1, ~~~ \forall i = 1, \dots, N,
\end{align*}
where the last-but-one inequality follows from~\eqref{ineqpertype} and the fact that $\lambda \leq \lambda_0$, implying that the system is stable.
This completes the proof in case $\epsilon = 0$.

In order to prove the statement in case $\epsilon > 0$, let
\begin{align*}
\sigma_j = \frac{\lambda_0 p_j \mathbb{E}[X_j] \mathbb{E}[S_j]}{\sum_{i=1}^{N} r_{ij} \tau_{ij}} \leq 1,
\end{align*}
representing the offered load of type-$j$ jobs as fraction of the weighted amount of time spent on these jobs by the servers collectively under the given assignment policy with $d>1$, and define
\begin{align*}
\hat\tau_{ij} &= \sigma_j \tau_{ij} \leq \tau_{ij}, \\
\Delta\tau_{ij} &= \tau_{ij} - \hat\tau_{ij} = (1 - \sigma_j) \tau_{ij},
\end{align*}
and
\begin{align*}
\Delta\tau_i = \sum_{j=1}^{M} \tau_{ij} - \sum_{j=1}^{M} \hat\tau_{ij} = \sum_{j=1}^{M} \Delta\tau_{ij}.
\end{align*}
The value of~$\hat\tau_{ij}$ may be interpreted as the fraction of time that server~$i$ would need to spend on type-$j$ jobs if the efforts of all servers for type-$j$ jobs are reduced proportionally to match the total offered load.
With that interpretation in mind, $\Delta\tau_{ij}$ and $\Delta\tau_i$ may be thought of as measures for the slack capacity.

Further introduce
\[
\Delta_j = \sum_{i=1}^{N} r_{ij} \tau_{ij} - \lambda_0 p_j \mathbb{E}[X_j] \mathbb{E}[S_j] = \sum_{i=1}^{N} r_{ij} \tau_{ij} - \sum_{i=1}^{N} r_{ij} \hat\tau_{ij} = \sum_{i=1}^{N} r_{ij} \Delta\tau_{ij}
\]
representing the slack between the weighted fraction of time that the servers collectively spend on type-$j$ jobs under the policy with $d > 1$ and the offered load of type-$j$ jobs,
\[
r_i = \frac{\sum_{j=1}^{M} p_j \mathbb{E}[X_j] \mathbb{E}[S_j]}{\sum_{j=1}^{M} p_j \mathbb{E}[X_j] \mathbb{E}[S_j] / r_{ij}}
\]
representing the time-average speed of server~$i$ when handling jobs of the various types in the nominal proportions, and
\begin{align*}
\Delta\lambda = \frac{\sum_{k=1}^{N} r_k \Delta\tau_k}{\sum_{j=1}^{M} p_j \mathbb{E}[X_j] \mathbb{E}[S_j]}.
\end{align*}

Now observe that on the one hand
\begin{align*}
\sum_{j=1}^{M} \Delta_j = \sum_{j=1}^{M} \sum_{i=1}^{N} r_{ij} \tau_{ij} - \lambda_0 \sum_{j=1}^{M} p_j \mathbb{E}[X_j] \mathbb{E}[S_j] \geq
\lambda_0 \epsilon \sum_{j=1}^{M} p_j \mathbb{E}[X_j] \mathbb{E}[S_j],
\end{align*}
while on the other hand
\begin{align*}
\sum_{j=1}^{M} \Delta_j = \sum_{j=1}^{M} \sum_{i=1}^{N} r_{ij} \Delta\tau_{ij} \leq \sum_{i=1}^{N} \Delta\tau_i \max_{j \in \mathcal{M}} r_{ij},
\end{align*}
and hence
\[
\Delta\lambda \geq \frac{\sum_{k=1}^{N} r_k \Delta\tau_k}{\sum_{i=1}^{N} \Delta\tau_i \max_{j \in \mathcal{M}} r_{ij}} \epsilon \lambda_{0}.
\]
Noting that $r_i > 0$ by virtue of the assumption that $r_{ij} > 0$ for all $i = 1, \dots, N$ and $j = 1, \dots, M$, we obtain that
\begin{align*}
\Delta\lambda \geq \kappa \epsilon \lambda_0,
\end{align*}
with $\kappa = \frac{\min_{i \in \mathcal{N}} r_i}{\max_{i \in \mathcal{N}, j \in \mathcal{M}} r_{ij}} > 0$.

Now consider the system with $d=1$ and total arrival rate $\lambda_0 + \Delta\lambda$, and suppose that a fraction $\lambda_0 / (\lambda_0 + \Delta\lambda)$ of the jobs are assigned according to the probabilities $\tilde{p}_{ij}$, while the remaining fraction $\Delta\lambda / (\lambda_0 + \Delta \lambda)$ of the jobs are assigned to server~$i$ with probability
\begin{align*}
\hat{p}_i = \frac{r_i \Delta\tau_i}{\sum_{k=1}^{N} r_k \Delta\tau_k}.
\end{align*}
Then each server behaves as a multi-class $M/G/1$ queue, and for an overall arrival rate $\lambda \leq \lambda_0 + \Delta\lambda$ the load on server~$i$ is
\begin{align*}
& \lambda \sum_{j=1}^{M} p_j \left(\frac{\lambda_0}{\lambda_0 + \Delta\lambda} \tilde{p}_{ij} + \frac{\Delta\lambda}{\lambda_0 + \Delta\lambda} \hat{p}_i\right) \frac{\mathbb{E}[X_j] \mathbb{E}[S_j]}{r_{ij}} \\
&= \frac{\lambda}{\lambda_0 + \Delta\lambda} \sum_{j=1}^{M} p_j \left(\lambda_0
\frac{r_{ij} \tau_{ij}}{\sum_{k=1}^{N} r_{kj} \tau_{kj}} + \Delta\lambda \frac{r_i \Delta\tau_i}{\sum_{k=1}^{N} r_k \Delta\tau_k}\right) \frac{\mathbb{E}[X_j] \mathbb{E}[S_j]}{r_{ij}} \\
&\leq \sum_{j=1}^{M} \tau_{ij} \frac{\lambda_0 p_j \mathbb{E}[X_j] \mathbb{E}[S_j]}{\sum_{k=1}^{N} r_{kj} \tau_{kj}} + \frac{\sum_{k=1}^{N} r_k \Delta\tau_k}{\sum_{j=1}^{M} p_j \mathbb{E}[X_j] \mathbb{E}[S_j]} \frac{r_i \Delta\tau_i}{\sum_{k=1}^{N} r_k \Delta\tau_k} \sum_{j=1}^{M} \frac{p_j \mathbb{E}[X_j] \mathbb{E}[S_j]}{r_{ij}} \\
&= \sum_{j=1}^{M} \tau_{ij} \sigma_j + r_i \Delta\tau_i \frac{\sum_{j = 1}^{M} p_j \mathbb{E}[X_j] \mathbb{E}[S_j] / r_{ij}}{\sum_{j=1}^{M} p_j \mathbb{E}[X_j] \mathbb{E}[S_j]} \\
&= \sum_{j=1}^{M} \hat\tau_{ij} + \Delta\tau_i = \sum_{j=1}^{M} \tau_{ij} < 1, ~~~ \forall i = 1, \dots, N,
\end{align*}
where the inequality in the third line follows from the fact that $\lambda \leq \lambda_0 + \Delta\lambda$.

This yields the statement of the proposition for any $\epsilon \geq 0$.
\end{proof}

\begin{remark}
We now present an example illustrating the role of the assumption that $r_{ij} > 0$ for all $i = 1, \dots, N$ and $j = 1, \dots, M$.  Consider a system with $N = 3$ servers, $M = 2$ job types, and service speeds $(r_{11}, r_{21}, r_{31}) = (1, 0, 0)$ and $(r_{12}, r_{22}, r_{32}) = (1, 1, 1)$.  Assume that $p_j = 1/2$, $E[S_j] = 1$, $E[X_j] = 1$, $j = 1, 2$, $d = 2$, and that all type-$1$ jobs are assigned to servers~$1$ and~$3$ while all type-$2$ jobs are assigned to servers~$2$ and~$3$.  We claim that the system is stable for any $\lambda < 2$.  In order to see that, observe that the number of type-$1$ jobs and the number of type-$2$ jobs are each individually bounded from above by the number of the jobs in an $M/G/1$ queue with load $\lambda / 2$.  Furthermore, type-$1$ jobs will never complete on server~$3$,  while in case of identical service times, type-$2$ jobs will never complete on server~$3$ before completing on server~$2$.  In other words, all effort of server~$3$ goes wasted.  Nevertheless, a system with $d = 1$ cannot be stabilized for any $\lambda \geq 2$, since type-1 jobs can only be successfully processed by server~$1$.  The wasted effort of server~$3$ could however be avoided in a system with $d = 1$ to sustain an arrival rate of type-$2$ jobs that is twice as large. 

While the assumption that $r_{ij} > 0$ for all $i = 1, \dots, N$ may in general not be strictly necessary, this example demonstrates that it cannot easily be relaxed without creating a need for a tedious case-by-case analysis to determine whether the system with $d = 1$ can be stabilized for a higher overall arrival rate, can only accommodate a larger arrival rate for some of the job types, or cannot support a higher arrival rate for any job type at all.
\end{remark}

\begin{theorem}
\label{thm: maximum stability d=1 identical}
In the case of known job types, the stability region for $d=1$ is strictly larger than the stability region for $d>1$ under the c.o.c.\ redundancy policy with identical replicas and static probabilistic assignment (which may depend on the job type) of the $d$ replicas.  
\end{theorem}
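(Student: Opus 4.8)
The plan is to reduce the theorem to an application of Proposition~\ref{propo: prob allocation}. Fix any static probabilistic assignment policy with $d>1$ that is stable at some arrival rate $\lambda_0>0$, and let $\tau_{ij}$ denote the induced long-run fraction of time that server~$i$ devotes to type-$j$ jobs. It then suffices to verify the two hypotheses \eqref{ineqpertype} and \eqref{ineqtotal} of the proposition, the latter with a strictly positive $\epsilon$ that stays bounded away from zero as $\lambda_0$ approaches the $d>1$ stability threshold. The conclusion of the proposition yields a $d=1$ policy stable at rate $\lambda_0(1+\kappa\epsilon)>\lambda_0$, and letting $\lambda_0$ approach the $d>1$ threshold shows that the $d=1$ stability region $\Lambda_K$ strictly contains the $d>1$ region.

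First I would set up a work-accounting identity. With identical replicas the speed variation is degenerate, $Y_{ij}\equiv c_j=\mathbb{E}[S_j]$, so a type-$j$ job of intrinsic size $x_j$ occupies server~$i$ for time $x_j c_j/r_{ij}$, and the \emph{speed-weighted} effort it costs at the server where it completes is exactly $r_{ij}\cdot x_j c_j/r_{ij}=x_j c_j$, irrespective of where it completes. Reading $\sum_i r_{ij}\tau_{ij}$ as the long-run rate at which the servers collectively expend speed-weighted effort on type-$j$ jobs, and noting that stability forces the type-$j$ completion rate to equal the arrival rate $\lambda_0 p_j$, the total effort splits as useful effort plus abandoned (wasted) effort, with the useful part equal to $\lambda_0 p_j\mathbb{E}[X_j]\mathbb{E}[S_j]$. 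Since wasted effort is nonnegative this gives \eqref{ineqpertype} immediately, and summing over $j$ reduces \eqref{ineqtotal} to showing that the total rate of wasted effort is at least $\lambda_0\epsilon\sum_j p_j\mathbb{E}[X_j]\mathbb{E}[S_j]$ for some $\epsilon>0$.

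The crux, and the step I expect to be the main obstacle, is establishing a strictly positive lower bound on the wasted-effort rate. Here I would invoke Property~\ref{prop: oldest job}: at every instant that the system is nonempty, the oldest job is served simultaneously at all $d\ge 2$ servers to which it was replicated. Since a job completes at a single server, at most one of those $d$ servers performs useful effort on it, so at that instant at least $d-1\ge 1$ servers perform effort that will be abandoned upon completion; the instantaneous wasted rate is therefore at least $\min_{i,j} r_{ij}>0$. Integrating, the long-run wasted-effort rate is bounded below by $(\min_{i,j} r_{ij})\cdot \mathbb{P}(\text{system nonempty})$, which is strictly positive because $\lambda_0>0$ keeps the servers busy a positive fraction of the time. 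Dividing this positive bound by the finite quantity $\lambda_0\sum_j p_j\mathbb{E}[X_j]\mathbb{E}[S_j]$ produces an admissible $\epsilon>0$.

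The delicate points to handle carefully in the write-up are that the effort counted per instant is not double-counted across the changing identity of the oldest job (one counts only effort spent while a given job is the oldest, which is genuinely wasted at its non-completing servers), that the long-run averages $\tau_{ij}$ and the nonempty-fraction exist (as already assumed in Proposition~\ref{propo: prob allocation}), and that $\mathbb{P}(\text{system nonempty})$ remains bounded away from zero as $\lambda_0$ rises to the threshold so that $\epsilon$ does not degenerate. With these in place, Proposition~\ref{propo: prob allocation} delivers the strict improvement, and hence the strict inclusion of the stability regions claimed by the theorem.
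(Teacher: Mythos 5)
Your proposal follows essentially the same route as the paper's proof: decompose the speed-weighted effort into a useful part (which equals the offered load exactly because identical replicas make the weighted effort at the completing server equal to $x_j c_j$ regardless of which server finishes) and a wasted part, lower-bound the wasted part via Property~\ref{prop: oldest job} by $(d-1)\min_{i,j} r_{ij}$ times the nonempty fraction, bound that fraction below by a constant multiple of $\lambda_0$, and feed the resulting $\epsilon>0$ into Proposition~\ref{propo: prob allocation}. The one point you flag but do not fully resolve—that $\epsilon$ must not degenerate as $\lambda_0$ varies—is settled exactly as you anticipate, by the bound $\bar{\pi}_0 \geq \lambda_0 \sum_j p_j \mathbb{E}[X_j]\mathbb{E}[S_j]/\max_{j}\sum_i r_{ij}$, which makes $\epsilon$ independent of $\lambda_0$.
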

\begin{proof}
Let $\tau_{ij}^{(1)}$ be the fraction of time that server $i$ spends on type-$j$ jobs that it will finish and $\tau_{ij}^{(2)}$ be the fraction of time that server $i$ spends on type-$j$ jobs that it will not finish, with $\tau_{ij}^{(1)}+\tau_{ij}^{(2)}=\tau_{ij}$ under a given assignment policy with $d>1$ for arrival rate $\lambda$.

For the effective component we have
\begin{align}
\label{eq: effictive component condition}
\sum_{i=1}^{N} r_{ij} \tau_{ij}^{(1)} = \lambda p_{j}\mathbb{E}[X_{j}] \mathbb{E}[S_{j}],
\end{align}
since
\begin{align*}
\sum_{i=1}^{N}  r_{ij} \mathbb{E}[T_{ij}^{(1)}] = \mathbb{E}[X_{j}] \mathbb{E}[S_{j}],
\end{align*}
where $T_{ij}^{(1)}$ is the amount of time that server $i$ spends on a type-$j$ job that it will finish, with $\tau_{ij}^{(1)} = \lambda p_{j} \mathbb{E}[T_{ij}^{(1)}]$. This holds because for identical replicas there are no server-dependent slow downs and whether or not a server will finish a particular job is not influenced by the random speed variations.

For the wastage component we have by Property~\ref{prop: oldest job} that
\begin{align}
\sum_{i=1}^{N} r_{ij} \tau_{ij}^{(2)} &\geq \sum_{i=1}^{N} \min_{k \in \mathcal{N}}r_{kj} \tau_{ij}^{(2)} = \min_{k \in \mathcal{N}}r_{kj}\sum_{i=1}^{N} \tau_{ij}^{(2)} \geq \min_{k \in \mathcal{N}}r_{kj} (d-1) \bar{\pi}_{0j},
\label{eq: thm1 wastage component}
\end{align}
where $\bar{\pi}_{0j}$ is the fraction of time that the system is non-empty in the limit as time goes to infinity and the oldest job is of type $j$. Letting $\bar{\pi}_{0}$ be the fraction of time that the system is non-empty with $\sum_{j=1}^{M} \bar{\pi}_{0j} = \bar{\pi}_{0}$, it follows that
\begin{align*}
\sum_{j=1}^{M} \sum_{i=1}^{N} r_{ij} \tau_{ij}^{(2)} \geq \min_{k \in \mathcal{N}, l \in \mathcal{M}}r_{kl} (d-1) \bar{\pi}_{0}.
\end{align*}
We can bound the fraction of time that the system is non-empty as
\begin{align*}
\bar{\pi}_{0} \geq \sum_{j=1}^{M} \frac{\lambda p_{j} \mathbb{E}[X_{j}] \mathbb{E}[S_{j}]}{\sum_{i=1}^{N} r_{ij}} \geq \frac{\lambda \sum_{j=1}^{M} p_{j} \mathbb{E}[X_{j}]\mathbb{E}[S_{j}]}{\max_{j \in \mathcal{M}} \sum_{i=1}^{N} r_{ij}} > 0.
\end{align*} 
Substituting this in Equation~\eqref{eq: thm1 wastage component} gives
\begin{align*}
%\label{eq: thm1 M fraction time}
\sum_{j=1}^{M} \sum_{i=1}^{N} r_{ij} \tau_{ij} \geq \lambda \left(1+  (d-1)\tfrac{\min_{k \in \mathcal{N}, l \in \mathcal{M}}r_{kl}}{\max_{j \in \mathcal{M}} \sum_{i=1}^{N} r_{ij}}\right) \sum_{j=1}^{M} p_{j} \mathbb{E}[X_{j}] \mathbb{E}[S_{j}],
\end{align*} 
so that Equation~\eqref{ineqtotal} holds with $\epsilon = (d-1) \frac{\min_{k \in \mathcal{N}, l \in \mathcal{M}}r_{kl}}{\max_{j \in \mathcal{M}} \sum_{i=1}^{N} r_{ij}}$ which is bounded away from zero. Noting that~\eqref{eq: effictive component condition} with in addition $\tau_{ij}^{(1)}+\tau_{ij}^{(2)}=\tau_{ij}$ gives~\eqref{ineqpertype}, the proof then follows from Proposition~\ref{propo: prob allocation}.
\end{proof}

\begin{remark}
In Theorem~\ref{thm: maximum stability d=1 identical} we obtained a lower bound for the wastage component that is strictly increasing in $d$, see Equation~\eqref{eq: thm1 wastage component}. We can also find an upper bound for the wastage component
\begin{align}
\sum_{j=1}^{M} \sum_{i=1}^{N} r_{ij} \tau^{(2)}_{ij} &\leq \max_{k \in \mathcal{N}, l \in \mathcal{M}} r_{kl} \left(N - \left\lceil \frac{N}{d} \right\rceil \right) \bar{\pi}^{*}_{0} \leq \max_{k \in \mathcal{N}, l \in \mathcal{M}} r_{kl} \left(N - \left\lceil \frac{N}{d} \right\rceil \right) \bar{\pi}_{0},
\label{eq: rem1 wastage component}
\end{align}
where $\bar{\pi}^{*}_{0}$ is the fraction of time that all servers are busy and thus $\bar{\pi}^{*}_{0} \leq \bar{\pi}_{0}$. 
Note that in the special case of homogeneous server speeds and $d=1$, $d=N-1$ and $d=N$ the lower- and upper bound for the wastage component coincide. 
It is therefore natural to conjecture that Theorem~\ref{thm: maximum stability d=1 identical} extends to the statement that the stability region is strictly decreasing in $d$.
\end{remark}

\begin{comment}
\begin{corollary}
The stability condition for $d=N-1$ odd number of identical replicas, general job sizes and homogeneous servers is given by $\lambda \mathbb{E}[X] < 2$.
\end{corollary}
\begin{proof}
From the coinciding lower- and upper bound for the wastage component in Equations~\eqref{eq: thm1 wastage component} and~\eqref{eq: rem1 wastage component}, respectively, we can conclude that the wastage component is
\begin{align*}
\sum_{j=1}^{M} \sum_{i=1}^{N} r_{ij} \tau^{(2)}_{ij} = (N-2) \bar{\pi}_{0}.
\end{align*}
First observe that the wastage component for the system with $d=N$ is equal to 
\begin{align*}
\sum_{i=1}^{N} r_{ij} \tau_{ij}^{(2)} = (N-1) \pi_{0},
\end{align*}
since all replicas start at exactly the same time. Now, consider the system with $N$ servers that are divided into two server pools of size $N/2$, where we allocate $d=N/2$ replicas to one of the two server pools uniformly at random. Within these server pools all the replicas start at the same time and therefore this sub-system behaves as an $M/G/1$ queue with arrival rate $\lambda/2$ and job size $X$. The stability condition of the sub-system is therefore given by $\lambda \mathbb{E}[X] < 2$. The wastage component for this sub-system is $(N/2-1) \pi_{0}$, which means that the wastage component for the whole system is $(N-2) \pi_{0}$. From the coinciding wastage components of this system and the system with $d=N-1$ we conclude that also the stability conditions must coincide. 
\end{proof}
\end{comment}

We proceed with the general case of speed variations that are NBU distributed. 

\begin{theorem}
\label{thm: maximum stability d=1 iid NBU}
In the case of known job types, the stability region for $d=1$ is larger than or equal to (respectively, strictly larger than) the stability region for $d>1$ with NBU (respectively, strictly NBU) distributed speed variations and static probabilistic assignment (which may depend on the job type) of the $d$ replicas. 
\end{theorem}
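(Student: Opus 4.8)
The plan is to reduce the claim to the two hypotheses of Proposition~\ref{propo: prob allocation} and then verify them using the NBU property at the level of a single job. Fix a stable assignment policy with $d>1$ at arrival rate $\lambda_0$, and let $\tau_{ij}$ be the long-run fraction of time server~$i$ serves type-$j$ jobs. The quantity $\sum_i r_{ij}\tau_{ij}$ is the long-run rate at which speed-weighted service time is expended on type-$j$ jobs, and by a rate-conservation (renewal-reward) argument it equals $\lambda_0 p_j \mathbb{E}[W_j]$, where $W_j = \sum_{i\in A_j} r_{ij}\Delta_i$ is the total speed-weighted time that the servers collectively spend on a tagged type-$j$ job, $A_j$ the set of servers it is replicated to and $\Delta_i$ the time server~$i$ devotes to it. Thus~\eqref{ineqpertype} is equivalent to the per-job inequality $\mathbb{E}[W_j]\ge \mathbb{E}[X_j]\mathbb{E}[S_j]$, and the slack needed for~\eqref{ineqtotal} will come from jobs that have several replicas in service simultaneously.

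The core step is the per-job inequality. I would condition on the intrinsic size $X_j=x$ and on the scheduled start times $s_1\le\dots\le s_d$ of the replicas (ordered, with corresponding speeds $r_k=r_{i_kj}$), which I argue are independent of the replica sizes $Y_{i_kj}$ since each size is only revealed as its replica is served. Writing $g=\bar{F}_{S_j}$ and using that the standalone completion time of replica~$k$ exceeds $t$ with probability $g(r_k t/x)$, the job's completion time $C=\min_k(s_k+xY_{i_kj}/r_k)$ satisfies $\mathbb{P}(C>u)=\prod_{k:\,s_k<u} g(r_k(u-s_k)/x)$, and a short computation gives $\mathbb{E}[W_j\mid x,s]=\int_0^\infty\big(\sum_{k:\,s_k\le u} r_k\big)\mathbb{P}(C>u)\,du$. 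On each interval $[s_m,s_{m+1})$ exactly the first $m$ replicas are active, so the iterated NBU inequality $\prod_{k=1}^m g(a_k)\ge g\big(\sum_{k=1}^m a_k\big)$, immediate from~\eqref{eq: NBU definition}, lets me bound the integrand below by $\big(\sum_{k\le m} r_k\big)\, g\big(\big(\sum_{k\le m} r_k(u-s_k)\big)/x\big)$. The substitution that linearises the argument of $g$ on each interval telescopes across intervals, and the boundary values match because $\sum_{k\le m} r_k(s_{m+1}-s_k)=\sum_{k\le m+1} r_k(s_{m+1}-s_k)$; the integral then collapses to $x\int_0^\infty g(w)\,dw=x\,\mathbb{E}[S_j]$. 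Taking expectations over $x$ and the start times yields $\mathbb{E}[W_j]\ge\mathbb{E}[X_j]\mathbb{E}[S_j]$, i.e.~\eqref{ineqpertype}; this already settles the non-strict NBU statement, since then~\eqref{ineqtotal} holds with $\epsilon=0$.

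For the strictly NBU statement I would extract a uniform positive slack from the oldest job. The telescoping bound is strict precisely on intervals where at least two replicas are simultaneously active with positive residual arguments, because then $\prod_{k\le m} g(a_k)>g\big(\sum_{k\le m} a_k\big)$. By Property~\ref{prop: oldest job} the oldest job is always in service on all $d\ge 2$ servers it has been replicated to, so whenever the oldest job is of type~$j$—a fraction $\bar{\pi}_{0j}$ of time with $\bar{\pi}_0=\sum_j\bar{\pi}_{0j}>0$ bounded below exactly as in the proof of Theorem~\ref{thm: maximum stability d=1 identical}—there is genuine concurrency and hence a strictly positive excess of $\sum_j\sum_i r_{ij}\tau_{ij}$ over $\lambda_0\sum_j p_j\mathbb{E}[X_j]\mathbb{E}[S_j]$. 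Bounding this excess below by $\lambda_0\epsilon\sum_j p_j\mathbb{E}[X_j]\mathbb{E}[S_j]$ for some $\epsilon>0$ independent of $\lambda_0$ establishes~\eqref{ineqtotal}, after which Proposition~\ref{propo: prob allocation} delivers stability of the $d=1$ system up to $\lambda_0(1+\kappa\epsilon)$ and hence a strictly larger region.

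I expect the main obstacle to lie entirely in making the per-job argument rigorous rather than in the NBU inequality itself. First, the independence of the scheduled start times $s_k$ from the tagged job's replica sizes is delicate: processing of one replica can delay completions elsewhere and thereby influence, through cancellations, when another replica of the same job is scheduled, so this independence—or an equivalent Palm / rate-conservation formulation that avoids it—must be justified carefully in the stationary regime. Second, and more seriously for the strict case, the per-job excess vanishes as the stagger between start times grows, so no uniform strict bound is available for an arbitrary job; the entire strict slack must be routed through Property~\ref{prop: oldest job}, and quantifying a lower bound on the concurrency-induced excess that is bounded away from zero uniformly in $\lambda_0$ (paralleling the wastage estimate of Theorem~\ref{thm: maximum stability d=1 identical}) is the crux.
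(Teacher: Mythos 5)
Your overall strategy coincides with the paper's: reduce to Proposition~\ref{propo: prob allocation}, prove the per-job inequality $\mathbb{E}[W_j]\ge\mathbb{E}[X_j]\mathbb{E}[S_j]$ via the iterated NBU inequality, and route the strict slack through Property~\ref{prop: oldest job}. Your per-job computation is the paper's argument written in a different clock: the paper works directly in the aggregate-weighted-time variable (its $T_{\mathrm{awt},j}^{I}$ with breakpoints $b_2,\dots,b_d$), whose piecewise ccdf is exactly what your linearising substitution produces, and the bound $\bar{F}_{T_{\mathrm{awt},j}^{I}}(t)\ge\bar{F}_{S_j}(t)$ in~\eqref{eq: thm2 ccdf awt} is your telescoped product inequality. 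So the non-strict half of the theorem is in order, and is the paper's own proof in disguise. (Your worry about independence of the start times from the tagged job's replica sizes is not the real obstacle: under FCFS the tagged job's own service never influences jobs queued ahead of its replicas, so the availability times $B$ are determined by the pre-arrival history and are independent of its own $Y_{ij}$'s; the paper simply conditions on $B$ and integrates against $f_B$.)

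The genuine gap is the one you flag yourself at the end: for the strict case you must produce $\epsilon>0$ bounded away from zero, and you do not. Knowing that the oldest job is in service at all its $d$ servers for a fraction $\bar{\pi}_{0}>0$ of the time is not by itself enough, because the excess contributed by that job, $\mathbb{E}[L(d,S_j,B)]$ in the paper's notation, is a tail integral of $\bar{F}_{T_{\mathrm{awt},j}}(t)-\bar{F}_{S_j}(t)$ over $t>\sum_{l}(b_d-b_l)$, and this tends to $0$ as the stagger $B_d$ grows (both ccdfs are integrable, so the tail integral vanishes as its lower limit recedes). One must therefore rule out that the start times are almost always badly staggered. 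The paper closes this with a bootstrapping step you are missing: it lower-bounds the expected overlap, $\lambda\,\mathbb{E}[T_{\mathrm{ol}}(d,S_j,B)]\ge\bar{\pi}_0$, with $\bar{\pi}_0$ itself bounded below by a policy-independent constant; since a large stagger forces a small overlap, this yields a $\tau(\delta)<\max\{\mathcal{R}_{S_j}\}$ with $\mathbb{P}(B_d<\tau(\delta))\ge\delta$, and conditioning on that event gives $\mathbb{E}[L(d,S_j,B)]\ge\delta\,I(\delta)>0$ with $I(\delta)=\int_{(d-1)\tau(\delta)}^{\infty}\bigl(\bar{F}_{T_{\mathrm{awt},j}}(t)-\bar{F}_{S_j}(t)\bigr)\,\mathrm{d}t$, which is exactly the uniform $\epsilon$ needed before invoking Proposition~\ref{propo: prob allocation}. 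Without an argument of this type the strict statement does not follow from your sketch.
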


\begin{proof}
Let $T_{ij}$ be the amount of time that server $i$ spends on an arbitrary type-$j$ job and let $\tau_{ij}$ be the fraction of time that server $i$ spends on type-$j$ jobs under a given assignment policy with $d>1$ for arrival rate $\lambda$ as introduced before.

Let $T_{\mathrm{awt},j}$ with distribution function $F_{T_{\mathrm{awt},j}}(t)$ (respectively, $T_{\mathrm{awt},j}^{I}$ with distribution function $F_{T_{\mathrm{awt},j}^{I}}(t)$) denote the aggregate weighted amount of time, weighted by the server speeds $r_{ij}$, invested in the service of an arbitrary type-$j$ job divided by the intrinsic job size (respectively, under the assignment $I$, where $I$ denotes an arbitrary set of $d$ servers). We have that $T_{\mathrm{awt},j}^{I}$ is equal in distribution to $\sum_{i \in I} \frac{r_{ij} T_{ij}}{X_{j}}$ (see Figure~\ref{fig: T overlap} for a schematic illustration), when server $I_{i}$ is available after the weighted amount of time $b_{i}$ of servers $I_{1},\dots,I_{i-1}$, for $i=2,\dots,d$ and $b_{1}=0$. Thus, a replica of the job is first served on server $I_{1}$ and after time $b_{2}$ server $I_{2}$ becomes available to serve another replica of this job, then after time $b_{3}$ the third server $I_{3}$ becomes available to serve yet another replica of this job, etc. Note that server $I_{i}$ may not necessarily serve this job, i.e., the job may already be completed before the server is available. The ccdf is
\begin{align*}
\bar{F}_{T_{\mathrm{awt},j}^{I}}(t) =
\begin{cases}
\mathbb{P}\left(Y_{I_{1}j} > \frac{r_{I_{1}j} t}{r_{I_{1}j}}\right) & \text{ for } 0 < t < b_{2}, \\
\mathbb{P}\left(Y_{I_{1}j} > b_{2} +\frac{r_{I_{1}j}(t-b_{2})}{r_{I_{1}j}+r_{I_{2}j}}\right) \cdot \mathbb{P}\left( Y_{I_{2}j} >  \frac{r_{I_{2}j}(t-b_{2})}{r_{I_{1}j}+r_{I_{2}j}}\right)  & \text{ for }b_{2} < t < \sum_{l=1}^{3} (b_{3}-b_{l}), \\
\vdots \\
\mathbb{P}\left( Y_{I_{1}j} > b_{2}+\frac{r_{I_{1}j}2(b_{3}-b_{2})}{r_{I_{1}j}+r_{I_{2}j}}+ \cdots + \frac{r_{I_{1}j}(t-\sum_{l=1}^{d}(b_{d}-b_{l}))}{\sum_{i \in I} r_{ij}}\right) \cdots \\ 
\quad \cdot \mathbb{P}\left( Y_{I_{d}j} >  \frac{r_{I_{d}j}(t-\sum_{l=1}^{d}(b_{d}-b_{l}))}{\sum_{i \in I} r_{ij}}\right) & \text{ for } \sum_{l=1}^{d}(b_{d}-b_{l}) < t.
\end{cases}
\end{align*}
Hence
\begin{align*}
\bar{F}_{T_{\mathrm{awt},j}^{I}}(t) =
\begin{cases}
\bar{F}_{Y_{I_{1}j}}\left(t \right) & \text{ for } 0 < t < b_{2}, \\
\bar{F}_{Y_{I_{1}j}}\left( b_{2}+\frac{r_{I_{1}j}(t-b_{2})}{r_{I_{1}j}+r_{I_{2}j}}\right) \cdot \bar{F}_{Y_{I_{2}j}}\left(\frac{r_{I_{2}j}(t-b_{2})}{r_{I_{1}j}+r_{I_{2}j}}\right)  & \text{ for }b_{2} < t < \sum_{l=1}^{3} (b_{3}-b_{l}), \\
\vdots \\
\bar{F}_{Y_{I_{1}j}}\left(b_{2}+\frac{r_{I_{1}j}2(b_{3}-b_{2})}{r_{I_{1}j}+r_{I_{2}j}}+ \cdots +\frac{r_{I_{1}j}\left(t-\sum_{l=1}^{d}(b_{d}-b_{l})\right)}{\sum_{i \in I} r_{ij}}\right)\cdots \\
\quad \cdot \bar{F}_{Y_{I_{d}j}}\left(\frac{r_{I_{d}j}(t-\sum_{l=1}^{d}(b_{d}-b_{l}))}{\sum_{i \in I} r_{ij}}\right) & \text{ for } \sum_{l=1}^{d}(b_{d}-b_{l}) < t,
\end{cases}
\end{align*}
and by definition of NBU distributions we get
\begin{align}
\bar{F}_{T_{\mathrm{awt},j}^{I}}(t) \geq
\begin{cases}
\bar{F}_{S_{j}}\left(t \right) & \text{ for } 0 < t < b_{2}, \\
\bar{F}_{S_{j}}\left(b_{2}+\frac{r_{I_{1}j}(t-b_{2})}{r_{I_{1}j}+r_{I_{2}j}}+ \frac{r_{I_{2}j}(t-b_{2})}{r_{I_{1}j}+r_{I_{2}j}}\right) = \bar{F}_{S_{j}}( t) & \text{ for }b_{2} < t < \sum_{l=1}^{3} (b_{3}-b_{l}) , \\
\vdots \\
\bar{F}_{S_{j}}\left( b_{2}+\frac{r_{I_{1}j}2(b_{3}-b_{2})}{r_{I_{1}j}+r_{I_{2}j}}+ \cdots+\frac{r_{I_{1}j}\left(t-\sum_{l=1}^{d}(b_{d}-b_{l})\right)}{\sum_{i \in I} r_{ij}}+\cdots \right.
\\ \quad \left. + \frac{r_{I_{d}j}\left(t-\sum_{l=1}^{d}(b_{d}-b_{l})\right)}{\sum_{i \in I} r_{ij}}\right)= \bar{F}_{S_{j}}( t) & \text{ for } \sum_{l=1}^{d}(b_{d}-b_{l}) < t.
\end{cases}
\label{eq: thm2 ccdf awt}
\end{align}
It then follows that the expected aggregate weighted amount of time invested in the service of a job is larger than or equal to the mean size of a single job instance, i.e.,
\begin{align}
\sum_{i=1}^{N} r_{ij} \mathbb{E}[T_{ij}] = \mathbb{E}[X_{j}]\int_{t=0}^{\infty} \bar{F}_{T_{\mathrm{awt},j}}(t) \mathrm{d}t &\geq \mathbb{E}[X_{j}]\int_{t=0}^{\infty} \bar{F}_{S_{j}}( t) \mathrm{d}t = \mathbb{E}[X_{j}] \mathbb{E}[S_{j}],
\label{eq: thm2 expected aggregated weighted time}
\end{align}
and substituting $\tau_{ij} = \lambda p_{j} \mathbb{E}[T_{ij}]$ yields Equation~\eqref{ineqpertype}
\begin{align*}
\sum_{i=1}^{N} r_{ij} \tau_{ij} \geq \lambda p_{j} \mathbb{E}[X_{j}] \mathbb{E}[S_{j}].
\end{align*}
Summing over all the job types gives Equation~\eqref{ineqtotal}
\begin{align}
\sum_{j=1}^{M} \sum_{i=1}^{N} r_{ij} \tau_{ij} \geq \lambda \sum_{j=1}^{M} p_{j} \mathbb{E}[X_{j}] \mathbb{E}[S_{j}].
\label{eq: thm2 fraction of time}
\end{align}
Note that at this point, by Proposition~\ref{propo: prob allocation} with $\epsilon=0$, it follows that the stability region for $d=1$ is larger than or equal to the stability region for $d>1$ in the case of NBU distributed speed variations. 

In the case of a strictly NBU distribution Equation~\eqref{eq: thm2 ccdf awt} is a strict inequality if two or more servers are serving this particular job, i.e., for $t > b_{2}$. We proceed by proving that Equation~\eqref{eq: thm2 fraction of time} holds with strict inequality.\\

Note that we can write Equation~\eqref{eq: thm2 expected aggregated weighted time} as
\begin{align}
\sum_{i=1}^{N} r_{ij} \mathbb{E}[T_{ij}] &= \mathbb{E}[X_{j}] \int_{t=0}^{\infty} \bar{F}_{T_{\mathrm{awt},j}}(t) \mathrm{d}t \geq \mathbb{E}[X_{j}] \Big( \int_{t=0}^{\infty} \bar{F}_{S_{j}}(t) \mathrm{d}t + \mathbb{E}[L(d,S_{j},B)] \Big),
\label{eq: thm2 strict expected aggregated weighted time}
\end{align}
where the latter expectation is with respect to $S$ and where
\begin{align*}
L(d,S_{j},\boldsymbol{b}) = \int_{t=\sum_{l=1}^{d}(b_{d}-b_{l})}^{\infty} \Big( \bar{F}_{T_{\mathrm{awt},j}}(t)-\bar{F}_{S_{j}}( t)\Big) \mathrm{d}t
\end{align*}
denotes the difference between, starting from the time a job is in service at $d$ servers, of the aggregate weighted amount of time invested in the service of an arbitrary type-$j$ job and the job size under the distributions of $X$ and $B$, where $B=(B_{2},\dots,B_{d})$ is the random variable that denotes the weighted amount of time after which the server is available, with $0 \leq B_{2} \leq \dots \leq B_{d}$ and joint probability density function $f_{B}(b_{2},\dots,b_{d})$.
Although all jobs that are in service at two or more servers contribute to the strict inequality of Equation \eqref{eq: thm2 fraction of time}, we only consider the job that is in service at all the $d$ servers.
Moreover, by Equation~\eqref{eq: thm2 ccdf awt} we know that $\bar{F}_{T_{\mathrm{awt},j}^{I}}(t) > \bar{F}_{S_{j}}(t)$ for all $t \geq \sum_{l=1}^{d}(b_{d}-b_{l})$.
Note that by Property~\ref{prop: oldest job}, if the system is non-empty, there is always a job that is served at all the servers that it has been replicated to. 
Substituting $\tau_{ij} = \lambda p_{j} \mathbb{E}[T_{ij}]$ in Equation~\eqref{eq: thm2 strict expected aggregated weighted time} and summing over all the job types gives
\begin{align}
\sum_{j=1}^{M} \sum_{i=1}^{N} r_{ij} \tau_{ij} & \geq \sum_{j=1}^{M} \Big( \lambda p_{j} \mathbb{E}[X_{j}]\mathbb{E}[S_{j}] + \min_{k \in \mathcal{N}, l \in \mathcal{M}} r_{kl} \lambda \mathbb{E}[L(d,S_{j},B)]  \Big)\nonumber \\
& \geq \lambda \sum_{j=1}^{M} p_{j} \mathbb{E}[X_{j}]\mathbb{E}[S_{j}] \Big(1+ \tfrac{\min_{k \in \mathcal{N}, l \in \mathcal{M}}r_{kl} \mathbb{E}[L(d,S_{j},B)]}{p_{j} \mathbb{E}[X_{j}]\mathbb{E}[S_{j}]} \Big).
\label{eq: thm2 sum fraction time}
\end{align}

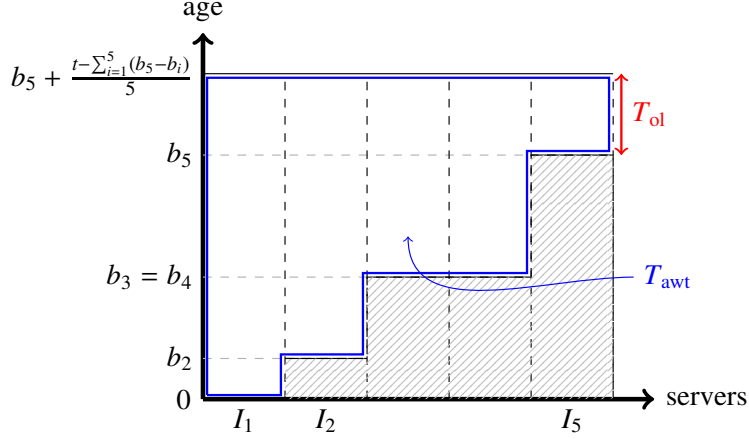
\begin{figure}[]
\centering
\resizebox{0.75\textwidth}{!}{\usetikzlibrary{patterns}
\begin{tikzpicture}[scale=1]
%\draw[help lines, color=gray!30, dashed] (0,0) grid (5.5,4.5);
\draw[->,ultra thick] (0,0) --(5.5,0) node[right]{servers};
\draw[->,ultra thick] (0,0)node[left]{$0$}--(0,4.5) node[above]{age};

\draw[] (0,4)node[left]{$b_{5}+\frac{t-\sum_{i=1}^{5}(b_{5}-b_{i})}{5}$}--(5,4);
\draw[] (4,3)--(5,3);
\draw[dashed,color=gray!60] (0,3)node[left,,color=black]{$b_{5}$} -- (4,3);
\draw[] (2,1.5)--(4,1.5);
\draw[dashed,color=gray!60] (0,1.5)node[left,,color=black]{$b_{3}=b_{4}$} -- (2,1.5);
\draw[] (1,0.5)--(2,0.5);
\draw[dashed,color=gray!60] (0,0.5)node[left,color=black]{$b_{2}$} -- (1,0.5);

\draw[dashed] (1,0)--(1,4);
\draw[dashed] (2,0)--(2,4);
\draw[dashed] (3,0)--(3,4);
\draw[dashed] (4,0)--(4,4);
\draw[dashed] (5,0)--(5,4);

\node[below] at (0.5,0) {$I_{1}$};
\node[below] at (1.5,0) {$I_{2}$};
\node[below] at (4.5,0) {$I_{5}$};

\draw[pattern=north east lines, dashed, pattern color=gray!50] (1,0.5)--(2,0.5)--(2,1.5)--(4,1.5)--(4,3)--(5,3)--(5,0)--(1,0);

%\draw[thick,color=red] (0.05,0.05)--(0.05,3.95)--(4.95,3.95)--(4.95,3.05)--(3.95,3.05)--(3.95,1.55)--(1.95,1.55)--(1.95,0.55)--(0.95,0.55)--(0.95,0.05)--(0.05,0.05);
\draw[<->,thick,color=red] (5.1,4)--node[right]{$T_{\text{ol}}$}(5.1,3);
\draw[thick,color=blue] (0.05,0.05)--(0.95,0.05)--(0.95,0.55)--(1.95,0.55)--(1.95,1.55)--(3.95,1.55) --(3.95,3.05)--(4.95,3.05)--(4.95,3.95)--(0.05,3.95)--(0.05,0.05);
\draw[<-,color=blue] (2.5,2) to[out=270, in=180] (5.25,1.5) node[right]{$T_{\text{awt}}$};
\end{tikzpicture}}
\caption{Illustration of the definition of $T_{\mathrm{awt}}$ and $T_{\mathrm{ol}}$ in case of homogeneous server speeds.}
\label{fig: T overlap}
\end{figure}

To prove the strict inequality in Equation~\eqref{eq: thm2 fraction of time} we have to show that $\mathbb{E}[ L(d,S_{j},B) ] > 0$.

Let $T_{\mathrm{ol}}(d,S_{j},\boldsymbol{b})$ denote the overlap in the service of an arbitrary type-$j$ job, see Figure \ref{fig: T overlap} for a visual interpretation, then
\begin{align}
\mathbb{E}[T_{\mathrm{ol}}(d,S_{j},B)] &= \int_{b_{2}=0}^{\infty} \dots \int_{b_{d}=0}^{\infty} T_{\mathrm{ol}}(d,S_{j},\boldsymbol{b}) f_{B}(b_{2},\dots,b_{d}) \mathrm{d}b_{2}\cdots \mathrm{d}b_{d},
\label{eq: thm2 def expected overlap}
\end{align}
where
\begin{align*}
T_{\mathrm{ol}}(d,S_{j},\boldsymbol{b}) = \int_{t=\sum_{l=1}^{d}(b_{d}-b_{l})}^{\infty} \frac{1}{d} \cdot \bar{F}_{T_{\mathrm{awt},j}}(t) \mathrm{d}t.
\end{align*}
Since $\lambda \mathbb{E}[T_{\mathrm{ol}}(d,S_{j},B)] \geq \bar{\pi}_{0}$, we can get a lower bound for the expected overlap
\begin{align*}
\mathbb{E}[T_{\mathrm{ol}}(d,S_{j},B)] \geq \frac{\bar{\pi}_{0}}{\lambda} &\geq \frac{1}{\lambda} \sum_{j=1}^{M} \frac{\lambda p_{j} \mathbb{E}[\min \{Y_{1j},\dots,Y_{dj}\}]}{\sum_{i=1}^{N} r_{ij}} \geq  \frac{\mathbb{E}[\min \{Y_{1j},\dots,Y_{dj}\}]}{\max_{j \in \mathcal{M}} \sum_{i=1}^{N} r_{ij}}.
\end{align*}
Observe that from this lower bound and Equation~\eqref{eq: thm2 def expected overlap} it follows that there exists $\tau(\delta) < \max \{\mathcal{R}_{S_{j}}\}$ such that $\mathbb{P}(B_{d} < \tau(\delta)) \geq \delta$, otherwise $\mathbb{E}[T_{\mathrm{ol}}(d,S_{j},B)]$ is too small.
Using this we can write
\begin{align*}
\mathbb{E}[L(d,S_{j},B)] &\geq \mathbb{P}(B_{d} < \tau(\delta)) \cdot \mathbb{E}[L(d,S_{j},B) | B_{d} < \tau(\delta)] \\
&\geq \delta \mathbb{E}[L(d,S_{j},B) | B_{d} < \tau(\delta)] \\
&\geq \delta \int_{t=(d-1)\tau(\delta)}^{\infty}  \Big( \bar{F}_{T_{\mathrm{awt},j}}(t) - \bar{F}_{S_{j}}(t) \Big) \mathrm{d}t  =: \delta  I(\delta) > 0.
\end{align*}
Hence,
\begin{align*}
\sum_{j=1}^{M} \sum_{i=1}^{N} r_{ij} \tau_{ij} \geq \lambda \sum_{j=1}^{M} p_{j} \mathbb{E}[X_{j}] \mathbb{E}[S_{j}] \Big(1+\frac{\min_{k \in \mathcal{N}, l \in \mathcal{M}}r_{kl} \delta I(\delta)}{p_{j}\mathbb{E}[X_{j}] \mathbb{E}[S_{j}]}\Big).
\end{align*}
Now the proof follows by Proposition~\ref{propo: prob allocation}, with $\epsilon = \min_{j \in \mathcal{M}} \frac{\min_{k \in \mathcal{N}, l \in \mathcal{M}}r_{kl} \delta I(\delta)}{p_{j}\mathbb{E}[X_{j}]\mathbb{E}[S_{j}]}$ which is bounded away from zero.
\end{proof}

\begin{remark}
We obtained a lower bound for $\mathbb{E}[L(d,S_{j},B))]$ which is strictly increasing in $d$, see Theorem~\ref{thm: maximum stability d=1 iid NBU}, but have no meaningful upper bound for this expression. Nonetheless, it would be natural to conjecture that Theorem~\ref{thm: maximum stability d=1 iid NBU} extends to the statement that the stability region is strictly decreasing in $d$.
\end{remark}

\begin{remark}
In Theorems~\ref{thm: maximum stability d=1 identical} and \ref{thm: maximum stability d=1 iid NBU} we restricted ourselves to static probabilistic assignment of the $d$ replicas. This restriction could probably be relaxed to dynamic assignments policies. Think for example of an assignment policy that replicates the job to, say, $\tilde{d}$ servers, where $\tilde{d}$ is a realization from some underlying distribution which may depend on the job type.
\end{remark}

In the next subsection we show, by providing counterexamples, that the assumptions in Theorems~\ref{thm: maximum stability d=1 identical} and \ref{thm: maximum stability d=1 iid NBU}, i.e., known job types and static probabilistic type-dependent assignment, are in fact \textit{necessary}.

\subsection{Necessary assumptions}
In this section we analyze the stability region in cases where the assumptions in Theorems \ref{thm: maximum stability d=1 identical} and \ref{thm: maximum stability d=1 iid NBU} do not all hold.

\begin{example}
\label{exam: slow fast server unknown job types}
Consider the scenario of Example~\ref{exam: slow fast servers}, i.e., $N=2$, $M=2$ and server speeds $(r_{11},r_{21})=(1,x)$ and $(r_{12},r_{22})=(x,1)$ with probabilities $p_{1}=p_{2}=0.5$, where $x < 1$. However, in this scenario the job types are unknown. In case of $d=1$, unknown job types implies that both servers are equivalent, thus the optimal static probabilistic assignment is in that case $\tilde{p}_{1j}=\tilde{p}_{2j}=0.5$ for $j=1,2$. The stability conditions, see also Equation~\eqref{eq: stability condition d=1 no job types}, are
\begin{align*}
&\lambda \sum_{j=1}^{2} p_{j} \mathbb{E}[X_{j}]\mathbb{E}[S_{j}] < 1, &\text{ for } d=2 \text{ (identical)},\\
&\lambda \sum_{j=1}^{2} p_{j} \mathbb{E}[X_{j}]\mathbb{E}\left[\min\left\{Y_{1j},\frac{Y_{2j}}{x}\right\}\right] < 1, &\text{ for } d=2 \text{ (i.i.d.)},\\
&\lambda \sum_{j=1}^{2} p_{j} \mathbb{E}[X_{j}] \mathbb{E}[S_{j}]\left(0.5 + \frac{0.5}{x}\right) < 1, &\text{ for } d=1,
\end{align*}
where $Y_{1j}$ and $Y_{2j}$ are i.i.d.\ copies of $S$. Note that $\mathbb{E}\left[\min\left\{Y_{1j},\frac{Y_{2j}}{x}\right\}\right] \leq \mathbb{E}[S_{j}]$, and that $0.5 + \frac{0.5}{x} > 1$ for $x<1$.
\end{example}

The above example shows that Theorems \ref{thm: maximum stability d=1 identical} and \ref{thm: maximum stability d=1 iid NBU} do not hold when job types cannot be distinguished.\\

\noindent \textit{Random job assignment:}\\
To achieve the largest stability region with no replication we allowed for static probabilistic assignment of jobs. 
Example~\ref{exam: slow fast server unknown job types} illustrates that Theorems \ref{thm: maximum stability d=1 identical} and \ref{thm: maximum stability d=1 iid NBU} do not hold when we restrict the assignment probabilities of jobs in case of no replication to be uniform. 

\subsection{No replication may be best for NWU speed variations}
Example~\ref{exam: slow fast servers} already showed that even for NWU speed variations, in this specific scenario, no replication gives a larger stability region than full replication. However, Example~\ref{exam: homogeneous servers} showed that in the scenario with homogeneous server speeds full replication gives a larger stability region. From both examples we conclude that in the case of known job types and NWU distributed speed variations the number of replicas that achieves the largest stability region heavily depends on the server speeds. Loosely speaking, full replication or no replication gives the largest stability region if the server speeds within a job type are balanced and unbalanced, respectively.  

\section{Full replication is best for NWU speed variations}
\label{sec: stability NWU distribution}
In this section we prove that full replication gives a larger stability region than no replication when the speed variations are NWU distributed and job types cannot be observed, see Theorem~\ref{thm: stability region d=N vs d=1}. We also discuss the possible extensions of this statement, replacing no replication by an arbitrary number of replicas, in Conjectures~\ref{conj: same starting times better} and~\ref{conj: stability region d=N vs arbitrary d}.\\ 

We first introduce some useful notation.
Consider $K = {N \choose d}$ probabilities, where each probability corresponds to assigning a job to one of the ${N \choose d}$ possible combinations of $d$ servers. Let $\boldsymbol{s}^{i} \subset \{1,\dots,N\}$ denote the set of servers corresponding to the $i$-th probability. Without loss of generality, we suppose that $\tilde{p}_{1}$ corresponds to the set of servers $\boldsymbol{s}^{1}=\{1,\dots,d\}$, $\tilde{p}_{2}$ corresponds to the set of servers $\boldsymbol{s}^{2}=\{1,\dots,d-1,d+1\}$ and finally $\tilde{p}_{K}$ corresponds to the set of servers $\boldsymbol{s}^{K}=\{N-d+1,\dots,N\}$, with $\sum_{i=1}^{K} \tilde{p}_{i} = 1$. 

For brevity, we further define $\gamma_{i} = \sum_{j = 1}^{M} p_{j} \mathbb{E}[X_{j}] \gamma_{ij}$, with
\begin{align*}
\gamma_{ij} = \sum_{h: i \in \boldsymbol{s}^{h}} \frac{\tilde{p}_{h}}{\sum_{h^{*}: i \in \boldsymbol{s}^{h^{*}}} \tilde{p}_{h^{*}}} \theta_{ijh},
\end{align*}
and
\begin{align*}
\theta_{ijh} = \mathbb{E}\left[\min \left\{\frac{Y_{1j}}{r_{s^{h}_{1}j}},\dots, \frac{Y_{dj}}{r_{s^{h}_{d}j}}\right\}\right],
\end{align*}
representing the expected execution time per unit size for a type-$j$ job assigned to the set of servers $s^h \ni i$ if all $d$~replicas were to start at the same time.  Thus, $\gamma_{i}$ may be interpreted as a \emph{proxy} for the load associated with an arbitrary job assigned to server~$i$.
In case the random speed variation $S_{j}$ is exponentially distributed, the expression for $\theta_{ijh}$ reduces to
\begin{align*}
\hat{\theta}_{ijh} = \frac{\mathbb{E}[S_{j}]}{\sum_{l=1}^{d} r_{s_{l}^{h}j}},
\end{align*}
and we will add a hat to the coefficients $\gamma_{i}$ in that case accordingly and informally refer to these as the \emph{exponential} load values.
For $d = 1$, the expression for $\gamma_{ij}$ simplifies to $\mathbb{E}[S_{j}] / r_{ij}$, yielding
\begin{align*}
\tilde{\gamma}_{i} = \sum_{j = 1}^{M} p_j \mathbb{E}[X_{j}] \frac{\mathbb{E}[S_{j}]}{r_{ij}},
\end{align*}
which is in fact the \emph{exact} load in that case.
For $d = N$, the values of $\gamma_{ij}$ are all equal to
\begin{align*}
\theta_{j} = \mathbb{E}[\min\left\{\frac{Y_{1j}}{r_{1j}}, \dots, \frac{Y_{Nj}}{r_{Nj}}\right\}],
\end{align*}
and hence the values of~$\gamma_{i}$ are all equal to
\begin{align*}
\gamma_{0} =  \sum_{j = 1}^{M} p_{j} \mathbb{E}[X_{j}] \theta_{j},
\end{align*}
which is also the \emph{exact} load since all the $N$~replicas are guaranteed to start at the same time.
Finally note that in case the random speed variation $S_{j}$ is exponentially distributed, the expression for $\theta_{j}$ simplifies to
\begin{align*}
\hat{\theta}_{j} = \frac{\mathbb{E}[S_j]}{\sum_{i = 1}^{N} r_{ij}},
\end{align*}
yielding
\begin{align*}
\hat{\gamma}_{0} = \sum_{j = 1}^{M} p_{j} \mathbb{E}[X_{j}] \frac{ \mathbb{E}[S_{j}]}{\sum_{i = 1}^{N} r_{ij}}.
\end{align*}

In the next theorem we prove that full replication gives a (strictly) larger stability region than no replication when the speed variations are (strictly) NWU distributed. 

\begin{theorem}
\label{thm: stability region d=N vs d=1}
In the case of unknown job types, the stability region for $d=N$ is larger than or equal to (respectively, strictly larger than) the stability region for $d=1$ with NWU (respectively, strictly NWU) distributed speed variations and static probabilistic assignment (which cannot depend on the job type) of the $d$ replicas. 
\end{theorem}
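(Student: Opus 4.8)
The plan is to exploit the fact that both stability regions are intervals of the form $[0,\lambda^{*})$, so that the desired inclusion collapses to a single scalar inequality between their right endpoints. For $d=N$ the endpoint is $1/\gamma_{0}$ by Equation~\eqref{eq: stability condition d=N no job types}. For $d=1$ I would first pin down the endpoint of $\Lambda_{U}$ in~\eqref{eq: stability condition d=1 no job types}: the load on server~$i$ is $\lambda \tilde{p}_{i}\tilde{\gamma}_{i}$ with a common, type-independent probability vector $(\tilde{p}_{i})$, so for a given $\lambda$ the constraints $\lambda\tilde{p}_{i}\tilde{\gamma}_{i}<1$ together with $\sum_{i}\tilde{p}_{i}=1$ are feasible iff $\sum_{i}1/(\lambda\tilde{\gamma}_{i})>1$; the load-balancing choice $\tilde{p}_{i}\propto 1/\tilde{\gamma}_{i}$ is optimal and yields $\Lambda_{U}=[0,\sum_{i=1}^{N}1/\tilde{\gamma}_{i})$. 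Hence the theorem is equivalent to $1/\gamma_{0}\geq \sum_{i=1}^{N}1/\tilde{\gamma}_{i}$, with strict inequality in the strictly NWU case.

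The second step uses the NWU property to bound the full-replication load $\gamma_{0}$ by its exponential counterpart $\hat{\gamma}_{0}$. Writing $g_{j}=-\log \bar{F}_{S_{j}}$ (with $g_{j}(0)=0$), the NWU inequality~\eqref{eq: NWU definition} is precisely the statement that $g_{j}$ is subadditive, whence $g_{j}((\sum_{i} r_{ij})t)\leq \sum_{i} g_{j}(r_{ij}t)$ and therefore $\prod_{i=1}^{N}\bar{F}_{S_{j}}(r_{ij}t)\leq \bar{F}_{S_{j}}((\sum_{i} r_{ij})t)$ for every $t\geq 0$. Since all replicas start simultaneously for $d=N$, one has $\mathbb{P}(\min_{i}\{Y_{ij}/r_{ij}\}>t)=\prod_{i=1}^{N}\bar{F}_{S_{j}}(r_{ij}t)$, so integrating over $t$ gives $\theta_{j}=\int_{0}^{\infty}\prod_{i}\bar{F}_{S_{j}}(r_{ij}t)\,\mathrm{d}t\leq \mathbb{E}[S_{j}]/\sum_{i} r_{ij}=\hat{\theta}_{j}$, and summing against $p_{j}\mathbb{E}[X_{j}]$ yields $\gamma_{0}\leq \hat{\gamma}_{0}$.

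The remaining, and most substantive, step is the purely deterministic inequality $\hat{\gamma}_{0}\sum_{i}1/\tilde{\gamma}_{i}\leq 1$, which no longer involves the distribution at all. Setting $b_{j}=p_{j}\mathbb{E}[X_{j}]\mathbb{E}[S_{j}]$ and $R_{j}=\sum_{i} r_{ij}$, I would introduce the per-type server fractions $f_{ij}=r_{ij}/R_{j}$ (so $\sum_{i} f_{ij}=1$) and the normalized weights $w_{j}=(b_{j}/R_{j})/\hat{\gamma}_{0}$ (so $\sum_{j} w_{j}=1$), which give $\tilde{\gamma}_{i}/\hat{\gamma}_{0}=\sum_{j} w_{j}/f_{ij}$. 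The weighted harmonic--arithmetic mean inequality (convexity of $x\mapsto 1/x$) then yields $(\sum_{j} w_{j}/f_{ij})^{-1}\leq \sum_{j} w_{j}f_{ij}$, and summing over $i$ while swapping the order of summation gives $\sum_{i}(\sum_{j}w_{j}/f_{ij})^{-1}\leq \sum_{j} w_{j}\sum_{i} f_{ij}=\sum_{j} w_{j}=1$, which is exactly $\hat{\gamma}_{0}\sum_{i}1/\tilde{\gamma}_{i}\leq 1$.

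Chaining the two bounds gives $1/\gamma_{0}\geq 1/\hat{\gamma}_{0}\geq \sum_{i}1/\tilde{\gamma}_{i}$, establishing the inclusion. For the strict statement I would observe that under a strictly NWU distribution the subadditivity of $g_{j}$ is strict on $(0,\infty)$, so whenever at least two servers have positive speed for type~$j$ (guaranteed by $r_{ij}>0$ and $N\geq 2$) the bound $\theta_{j}<\hat{\theta}_{j}$ is strict, forcing $\gamma_{0}<\hat{\gamma}_{0}$ and hence $1/\gamma_{0}>\sum_{i}1/\tilde{\gamma}_{i}$. I expect the main obstacle to be the conceptual two-step reduction in the third step: recognizing that, once the NWU property has collapsed the full-replication load to the exponential load $\hat{\gamma}_{0}$, the comparison with the load-balanced no-replication policy is governed entirely by the heterogeneity of the speeds \emph{across} job types and reduces to a single application of the arithmetic--harmonic mean inequality. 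Some care is also needed in pinning down the exact endpoint of $\Lambda_{U}$ and in checking that boundary cases (such as vanishing speeds) do not disrupt the subadditivity argument.
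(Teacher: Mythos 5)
Your proposal is correct and its overall skeleton coincides with the paper's: reduce both stability regions to intervals and compare endpoints, use the NWU property to bound $\gamma_{0}$ by the exponential load $\hat{\gamma}_{0}$, and then establish the purely algebraic inequality $1/\hat{\gamma}_{0}\geq\sum_{i=1}^{N}1/\tilde{\gamma}_{i}$. The one place where you genuinely diverge is that last step. The paper delegates it to Lemma~\ref{lem: underlying algebraic equation}, which is stated for general $d$: it first balances the per-server loads to show that the optimal assignment attains the value $d/\sum_{i}1/\hat{\gamma}_{i}$, and then argues that $\sum_{i}1/\hat{\gamma}_{i}\leq d/\hat{\gamma}_{0}$ by rewriting the sum in terms of the fractions $x_{lj}$ and invoking a concavity-and-maximization claim that is left rather terse. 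Your change of variables $f_{ij}=r_{ij}/R_{j}$, $w_{j}=(b_{j}/R_{j})/\hat{\gamma}_{0}$ followed by the weighted harmonic--arithmetic mean inequality $\bigl(\sum_{j}w_{j}/f_{ij}\bigr)^{-1}\leq\sum_{j}w_{j}f_{ij}$ and the interchange $\sum_{i}\sum_{j}w_{j}f_{ij}=\sum_{j}w_{j}=1$ is a fully explicit, self-contained proof of exactly the $d=1$ instance of that lemma; it is arguably tighter than the paper's argument, at the cost of not yielding the general-$d$ version that the paper reuses when discussing its conjectures. Your identification of the $\Lambda_{U}$ endpoint as $\sum_{i}1/\tilde{\gamma}_{i}$ via $\tilde{p}_{i}\propto 1/\tilde{\gamma}_{i}$, the subadditivity argument for $\theta_{j}\leq\hat{\theta}_{j}$, and the strictness discussion all match the paper's reasoning (the paper simply cites Stoyan for the NWU bound), and your caveat about at least two servers having positive speed is a reasonable precision the paper leaves implicit.
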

\begin{proof}
For $d=1$, the stability condition is $\max_{i \in \mathcal{N}} \tilde{p}_{i} \tilde{\gamma}_{i} < 1$ for some probabilities $\tilde{p}_{i}$, see~\eqref{eq: stability condition d=1 no job types}. For $d=N$, the stability condition is $\gamma_{0} < 1$, see~\eqref{eq: stability condition d=N no job types}.
For all NWU distributed speed variations, see for example~\cite[Sec.~1.6]{S-CMQST}, we have
\begin{align*}
\theta_{j} = \mathbb{E}\left[\min \left\{\frac{Y_{1j}}{r_{1j}},\dots, \frac{Y_{Nj}}{r_{Nj}}\right\}\right] \leq \frac{\mathbb{E}[S_{j}]}{\sum_{i=1}^{N} r_{ij}} = \hat{\theta}_{j},
\end{align*}
and hence $\gamma_{0} \leq \hat{\gamma}_{0}$,
which is a strict inequality in the case of a strictly NWU distribution. 
The remainder of the proof follows as a special case of Lemma~\ref{lem: underlying algebraic equation} stated below, noting that $\hat{\gamma}_{i} \sum_{h : i \in \boldsymbol{s}^{h}} \tilde{p}_{h} = \tilde{p}_{i} \tilde{\gamma}_{i}$ when $d=1$. 
\end{proof}

Lemma~\ref{lem: underlying algebraic equation} establishes a fundamental algebraic inequality for the exponential load values which will be of key importance throughout the remainder of this section as well. 

\begin{lemma}
\label{lem: underlying algebraic equation}
For all choices of the probabilities $\tilde{p}_{k}$, $k=1,\dots,K$, we have
\begin{align}
\label{eq: equation z gamma}
\hat{\gamma}_{0} \leq \max_{i \in \mathcal{N}} \hat{\gamma}_{i} \sum_{h : i \in \boldsymbol{s}^{h}} \tilde{p}_{h}.
\end{align}
\end{lemma}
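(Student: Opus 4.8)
The plan is to strip the statement down to pure algebra. Writing $a_j = p_j\mathbb{E}[X_j]\mathbb{E}[S_j]$, $R_j=\sum_{i=1}^N r_{ij}$, and $R_j^h=\sum_{i\in\boldsymbol{s}^{h}}r_{ij}$, the left-hand side is $\hat{\gamma}_0=\sum_j a_j/R_j$, while the $i$-th term on the right becomes $\hat{\gamma}_i\sum_{h:i\in\boldsymbol{s}^{h}}\tilde{p}_h=\sum_j a_j f_j(i)$ with $f_j(i):=\sum_{h:i\in\boldsymbol{s}^{h}}\tilde{p}_h/R_j^h\ge 0$. The first thing I would record is the normalization identity $\sum_{i=1}^N r_{ij}f_j(i)=1$ for every $j$, obtained by swapping the order of summation and using $\sum_{i\in\boldsymbol{s}^{h}}r_{ij}=R_j^h$ together with $\sum_h\tilde{p}_h=1$. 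This is the only place where the combinatorial structure of the $d$-subsets enters, and it is what pins down the scale of the $f_j$.

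Second, I would reduce the max-inequality to the existence of a \emph{single} vector of server weights. It suffices to produce $t_i\ge 0$ with $\sum_i t_i=1$ and $\sum_i t_i f_j(i)\ge 1/R_j$ for all $j$, since then $\max_{i\in\mathcal{N}}\sum_j a_j f_j(i)\ge\sum_i t_i\sum_j a_j f_j(i)=\sum_j a_j\sum_i t_i f_j(i)\ge\sum_j a_j/R_j=\hat{\gamma}_0$. The delicate point is that $t_i$ must be common to all job types $j$, whereas the ``obvious'' choice $t_i=r_{ij}/R_j$ that would make the normalization identity tight depends on~$j$.

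Third, and at the core, I would carry this out cleanly in the case $d=1$, which is exactly what Theorem~\ref{thm: stability region d=N vs d=1} invokes. Here $\boldsymbol{s}^{i}=\{i\}$, $f_j(i)=\tilde{p}_i/r_{ij}$, and $\hat{\gamma}_i=\tilde{\gamma}_i=A_i:=\sum_j a_j/r_{ij}$. Let $v=\max_{i}\tilde{p}_i A_i$. From $\tilde{p}_i\le v/A_i$ and $\sum_i\tilde{p}_i=1$ I obtain $v\ge 1/\sum_i(1/A_i)$. It then remains to show $\hat{\gamma}_0\le 1/\sum_i(1/A_i)$, and this follows from Cauchy--Schwarz: for any probability vector $t$, $\big(\sum_i r_{ij}\big)\big(\sum_i t_i^2/r_{ij}\big)\ge\big(\sum_i t_i\big)^2=1$ gives $1/R_j\le\sum_i t_i^2/r_{ij}$; multiplying by $a_j$ and summing yields $\hat{\gamma}_0\le\sum_i t_i^2 A_i$, which is minimized over the simplex at $t_i\propto 1/A_i$ with value exactly $1/\sum_i(1/A_i)$. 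Chaining the two bounds gives $\hat{\gamma}_0\le 1/\sum_i(1/A_i)\le v$.

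Finally, for general $d$ I would mirror this two-step scheme. Using $\sum_i P_i=d$ (with $P_i=\sum_{h:i\in\boldsymbol{s}^{h}}\tilde{p}_h$) together with $\hat{\gamma}_i P_i\le v$ gives $v\ge d/\sum_i(1/\hat{\gamma}_i)$, after which the statement reduces to the scale-free inequality $\hat{\gamma}_0\sum_i(1/\hat{\gamma}_i)\le d$, which collapses to the $d=1$ inequality above. I expect this last inequality to be the main obstacle: unlike $A_i$ in the $d=1$ case, the values $\hat{\gamma}_i$ themselves depend on the assignment probabilities $\tilde{p}_h$ through the conditional subset distribution, so the clean ``optimize the test vector $t$'' step no longer decouples. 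My plan to overcome this is to apply Cauchy--Schwarz subset-by-subset to lower-bound each ratio $\big(\sum_{i\in\boldsymbol{s}^{h}}t_i\big)/R_j^h$ and then exploit convexity of $x\mapsto 1/x$ together with the normalization identity; checking that one $t$ works simultaneously for all $j$ is the crux. The symmetric case where all $r_{ij}$ are equal, in which every inequality above holds with equality, serves as a consistency check and a guide for choosing the weights.
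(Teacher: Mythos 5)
Your reduction of the right-hand side is sound and in fact tighter than the paper's: from $P_i:=\sum_{h:i\in\boldsymbol{s}^{h}}\tilde{p}_{h}\le v/\hat{\gamma}_{i}$ with $v=\max_{i\in\mathcal{N}}\hat{\gamma}_{i}P_i$ and $\sum_{i}P_i=d$ you get $v\ge d/\sum_{i}(1/\hat{\gamma}_{i})$ rigorously, whereas the paper obtains the same bound by ``setting all terms equal,'' which is delicate because the $\hat{\gamma}_{i}$ themselves depend on the $\tilde{p}_{h}$. Both arguments then hinge on the same key inequality, $\hat{\gamma}_{0}\sum_{i}(1/\hat{\gamma}_{i})\le d$. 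For $d=1$ your Cauchy--Schwarz chain ($1/R_j\le\sum_i t_i^2/r_{ij}$ for any probability vector $t$, summed against $a_j$ and optimized at $t_i\propto 1/A_i$) proves this cleanly and completely; this is a genuinely different, and arguably more transparent, closing step than the paper's concavity argument in the variables $x_{hj}=R_j^h/R_j$. Since Theorem~\ref{thm: stability region d=N vs d=1} only invokes the lemma with $d=1$, that application is fully covered by your argument.

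The gap is the case $1<d<N$, for which the lemma is also stated (and later used in the discussion of Conjecture~\ref{conj: stability region d=N vs arbitrary d}). Your assertion that $\hat{\gamma}_{0}\sum_{i}(1/\hat{\gamma}_{i})\le d$ ``collapses to the $d=1$ inequality'' is not justified: for $d>1$ each $\hat{\gamma}_{i}$ is the $\tilde{p}$-dependent mixture $\frac{1}{P_i}\sum_{h:i\in\boldsymbol{s}^{h}}\tilde{p}_{h}\sum_j a_j/R_j^h$, so $1/\hat{\gamma}_{i}$ is a ratio of $\tilde{p}$-weighted sums and the per-server Cauchy--Schwarz step no longer decouples; you acknowledge this yourself, but the subset-by-subset plan you sketch is not carried out, and it is precisely the step where all the work lies (it is also the step the paper handles only tersely, via a concavity claim for the function of the $x_{hj}$ under the constraint that the $\sum_i r_{ij}$ are fixed). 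So as written the proposal proves the lemma for $d=1$ by a different and valid route, but does not prove the lemma as stated for general $d$.
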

\begin{proof}
If we minimize the right-hand side in Equation~\eqref{eq: equation z gamma} by setting
\begin{align*}
\hat{\gamma}_{1} \sum_{h : 1 \in \boldsymbol{s}^{h}} \tilde{p}_{h} = \dots = \hat{\gamma}_{N} \sum_{h : N \in \boldsymbol{s}^{h}} \tilde{p}_{h},
\end{align*} 
then it follows that this term is equal to  
\begin{align*}
\frac{d}{\frac{1}{\hat{\gamma}_{1}} + \dots + \frac{1}{\hat{\gamma}_{N}}} = \frac{d \prod_{k =1}^{N} \hat{\gamma}_{k}}{\sum_{l=1}^{N} \prod_{k \neq l} \hat{\gamma}_{k}} = \hat{\gamma}_{i} \sum_{h : i \in \boldsymbol{s}^{h}} \tilde{p}_{h}.
\end{align*}
Note that for $d=1$ we can get an explicit expression for the probabilities since we have a system of $N$ equations with $N$ unknowns, i.e., $\tilde{p}_{i} = \frac{\prod_{k=1, k \neq i}^{N} \hat{\gamma}_{k}}{\sum_{l=1}^{N} \prod_{k \neq l} \hat{\gamma}_{k}}$. For $d>1$ we have $K$ unknowns which makes the system of equations underdetermined. 
Thus, Equation~\eqref{eq: equation z gamma} is equivalent to
\begin{align}
\hat{\gamma}_{0} \leq \frac{d}{\frac{1}{\hat{\gamma}_{1}} + \dots + \frac{1}{\hat{\gamma}_{N}}} \Leftrightarrow \frac{d}{\hat{\gamma}_{0}} \geq  \frac{1}{\hat{\gamma}_{1}} + \dots + \frac{1}{\hat{\gamma}_{N}}.
\label{eq: expression gamma z tilde}
\end{align}

We can rewrite the right-hand side of the expression to
\begin{align*}
\frac{1}{\hat{\gamma}_{1}} + \dots + \frac{1}{\hat{\gamma}_{N}} &= \frac{1}{\sum_{h : 1 \in \boldsymbol{s}^{h}} \frac{\tilde{p}_{h}}{\sum_{h^{*}: 1 \in \boldsymbol{s}^{h^{*}}} \tilde{p}_{h^{*}}} (\frac{\hat{\gamma}_{01}}{x_{h1}} + \dots + \frac{\hat{\gamma}_{0M}}{x_{hM}})} + \dots + \frac{1}{\sum_{h : N \in \boldsymbol{s}^{h}} \frac{\tilde{p}_{h}}{\sum_{h^{*}: N \in \boldsymbol{s}^{h^{*}}} \tilde{p}_{h^{*}}} (\frac{\hat{\gamma}_{01}}{x_{h1}} + \dots + \frac{\hat{\gamma}_{0M}}{x_{hM}})},
\end{align*}
where $\hat{\gamma}_{0j}=p_{j} \mathbb{E}[X_{j}] \frac{ \mathbb{E}[S_{j}]}{\sum_{i = 1}^{N} r_{ij}}$ and $x_{lj} = \frac{\sum_{i=1}^{d}r_{s^{l}_{i}j}}{\sum_{i=1}^{N} r_{ij}}$ for $l=1,\dots,K$ and $j=1,\dots,M$. The above expression is concave in $x_{ij}$ when fixing the values $\sum_{i=1}^{N} r_{ij}$, $j=1,\dots,M$, and is therefore maximized for $x_{l1}=\dots=x_{lM}$ for all $l=1,\dots,K$, for which the expression is equal to $\frac{d}{\hat{\gamma}_{0}}$.
\end{proof}

Extending Theorem~\ref{thm: stability region d=N vs d=1} to all values of $1 \leq d \leq N$ is challenging. One of the key difficulties is that the various replicas do not necessarily start at the same time as a result of queueing, making it impossible to determine the exact load values when $d$ is strictly between $1$ and $N$. Establishing suitable lower bounds for the load values would provide a potential way to circumvent that issue. The next lemma presents a possible path in that direction by showing that the minimum expected aggregate weighted load is achieved when all replicas start at exactly the same time. 
 
\begin{lemma}
\label{lem: expected aggregate weighted amount of time}
For any number of replicas and NWU distributed job sizes the expected aggregate weighted amount of time invested in the service of a job is minimized when all the replicas start at exactly the same time. Specifically, for each job type $j$,
\begin{align*}
\sum_{i \in \boldsymbol{s}^{h}} r_{ij} \mathbb{E}[T_{ij}] \geq \sum_{i \in \boldsymbol{s}^{h}} r_{ij} \theta_{ijh}.
\end{align*}
\end{lemma}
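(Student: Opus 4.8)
The plan is to follow the template of the proof of Theorem~\ref{thm: maximum stability d=1 iid NBU}, but to compare the aggregate weighted service time against the \emph{simultaneous-start} benchmark rather than against a single replica, and to reverse the role of the tail inequality. Fix a job type~$j$ and a server set $\boldsymbol{s}^{h}$, and write $R=\sum_{i\in\boldsymbol{s}^{h}}r_{ij}$. The distribution-free piecewise expression for $\bar F_{T_{\mathrm{awt},j}^{\boldsymbol{s}^{h}}}$ derived in that proof still applies: in weighted-work coordinates the job survives up to aggregate weighted work~$t$ precisely when every active replica survives, so $\bar F_{T_{\mathrm{awt},j}^{\boldsymbol{s}^{h}}}(t)=\prod_{l}\bar F_{S_{j}}(w_{l}(t))$, where $w_{l}(t)$ is the own weighted work accrued by server $s^{h}_{l}$ (zero before it joins, thereafter growing proportionally to $r_{s^{h}_{l}j}$ among the active servers) and $\sum_{l}w_{l}(t)=t$. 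When all replicas start together, $w_{l}(t)=\tfrac{r_{s^{h}_{l}j}}{R}t$ throughout, and $\int_{0}^{\infty}\prod_{l}\bar F_{S_{j}}(\tfrac{r_{s^{h}_{l}j}}{R}t)\,\mathrm dt=R\,\theta_{jh}=\sum_{i\in\boldsymbol{s}^{h}}r_{ij}\theta_{ijh}$. Since $\sum_{i\in\boldsymbol{s}^{h}}r_{ij}\mathbb E[T_{ij}]$ equals $\int_{0}^{\infty}\bar F_{T_{\mathrm{awt},j}^{\boldsymbol{s}^{h}}}(t)\,\mathrm dt$ up to the intrinsic-size factor, the lemma reduces to the integral inequality $\int_{0}^{\infty}\bar F_{T_{\mathrm{awt},j}^{\boldsymbol{s}^{h}}}(t)\,\mathrm dt\ge\int_{0}^{\infty}\prod_{l}\bar F_{S_{j}}(\tfrac{r_{s^{h}_{l}j}}{R}t)\,\mathrm dt$, any common intrinsic-size factor cancelling on both sides.

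I would next emphasise what \emph{cannot} be done here: unlike in the NBU argument there is no useful pointwise bound. Comparing two splits of a common total $t=\sum_{l}w_{l}$ of arguments of $\bar F_{S_{j}}$ is a statement about log-convexity of the tail, which is strictly stronger than NWU, so a term-by-term domination of the simultaneous ccdf will in general fail. Instead the plan is to parametrise the staggered configuration by the join offsets $0=b_{1}\le b_{2}\le\cdots\le b_{d}$ (the aggregate weighted work at which each successive server becomes available, exactly as in the proof of Theorem~\ref{thm: maximum stability d=1 iid NBU}) and to show that $\int_{0}^{\infty}\bar F_{T_{\mathrm{awt},j}^{\boldsymbol{s}^{h}}}(t)\,\mathrm dt$ is nondecreasing in every offset, so that it is minimised when all offsets vanish, which is precisely the simultaneous-start benchmark.

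The driving mechanism is a single pairwise application of NWU, which I would first exhibit in the transparent case $d=2$ with $r_{s^{h}_{1}j}=r_{s^{h}_{2}j}$. Writing the expected aggregate weighted work as a function of the offset~$b$ and differentiating, the substitution $t=b+2s$ turns the claim $\tfrac{\mathrm d}{\mathrm db}\int_{0}^{\infty}\bar F_{T_{\mathrm{awt},j}^{\boldsymbol{s}^{h}}}(t)\,\mathrm dt\ge 0$ into $2\int_{0}^{\infty}\bar F_{S_{j}}(b+s)\,f_{S_{j}}(s)\,\mathrm ds\ge\bar F_{S_{j}}(b)$. This holds because NWU gives $\bar F_{S_{j}}(b+s)\ge\bar F_{S_{j}}(b)\bar F_{S_{j}}(s)$ together with $\int_{0}^{\infty}\bar F_{S_{j}}(s)f_{S_{j}}(s)\,\mathrm ds=\tfrac12$; a strictly NWU tail makes the inequality strict. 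Intuitively, because a used replica stochastically outlives a fresh one under NWU, a late-joining fresh replica performs more expected extra work than had it started earlier, so delaying a start can only inflate the aggregate weighted work.

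To reach the general statement I would bring the offsets down to zero one at a time, starting from the largest, reducing each step to the pairwise inequality above. The servers that have already merged are treated as a single virtual replica whose survival function is the product $\prod_{l}\bar F_{S_{j}}(w_{l}(\cdot))$; since NWU is preserved under products of ccdfs (if $\bar F_{1},\bar F_{2}$ are NWU then so is $\bar F_{1}\bar F_{2}$, directly from Definition~\ref{def: NBU NWU distribution}), this virtual replica is again NWU and the delay-comparison for the currently last-joining server takes the same form as in the $d=2$ computation. The main obstacle is the bookkeeping in the heterogeneous, multi-server regime: once several servers with distinct speeds $r_{s^{h}_{l}j}$ are active, the proportional work-sharing changes the rates $\dot w_{l}$ at each join, and one must verify in weighted-work coordinates that the residual-life comparison underlying the pairwise NWU step lines up across these rate changes, and that the partial regimes before all $d$ servers have joined are handled. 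Property~\ref{prop: oldest job} is what guarantees that the oldest job is simultaneously in service at all servers to which it was replicated, so that the $d$-server survival analysis is indeed the relevant one.
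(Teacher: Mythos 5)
Your route is genuinely different from the paper's. The paper proves the lemma by a \emph{pointwise} comparison: in each regime of the piecewise expression for $\bar F_{T_{\mathrm{awt},j}^{I}}$ it asserts, ``by definition of NWU distributions'', that the staggered-start ccdf dominates $\prod_{l}\bar F_{Y_{I_l j}}\bigl(r_{I_l j}t/\sum_{i\in I}r_{ij}\bigr)$ for every $t$, and then integrates; you instead integrate first and argue monotonicity of the integral in the join offsets. Your criticism of the pointwise route is substantive: only the first regime (one active server versus the full product) is an instance of the NWU inequality $\bar F(\sum_l w_l)\ge\prod_l\bar F(w_l)$, while the later regimes compare $\prod_{l\in A}\bar F(w_l)$ with $\prod_{m}\bar F(v_m)$ for two \emph{different} splits of the same total $t$ --- a Schur-type statement tied to log-convexity of $\bar F_{S_j}$ that does not follow from subadditivity of $-\log\bar F_{S_j}$ alone. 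Your $d=2$ homogeneous computation is also correct: $E'(b)=2\int_0^\infty\bar F_{S_j}(b+s)f_{S_j}(s)\,\mathrm ds-\bar F_{S_j}(b)\ge 2\bar F_{S_j}(b)\int_0^\infty\bar F_{S_j}(s)f_{S_j}(s)\,\mathrm ds-\bar F_{S_j}(b)=0$ by NWU.

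However, the step you defer as ``bookkeeping'' is where the argument actually breaks, and not for bookkeeping reasons. Redo your derivative computation for $d=2$ with heterogeneous speeds $r_1\neq r_2$, server $1$ starting first with a real-time offset $\delta$ for server $2$. On the event $\{Y_1/r_1<Y_2/r_2\}$ replica $1$ still wins for small $\delta$ and the aggregate weighted work changes at rate $-r_2$; on the complementary event replica $2$ wins and the rate is $+r_1$. Hence $\frac{\mathrm d}{\mathrm d\delta}\mathbb E[T_{\mathrm{awt}}]\big|_{\delta=0}=r_1\,\mathbb P\bigl(\tfrac{Y_1}{r_1}>\tfrac{Y_2}{r_2}\bigr)-r_2\,\mathbb P\bigl(\tfrac{Y_1}{r_1}<\tfrac{Y_2}{r_2}\bigr)$, and nonnegativity requires $\mathbb P(Y_1/r_1>Y_2/r_2)\ge r_2/(r_1+r_2)$. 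This holds with equality for exponential $S_j$, but for the strictly NWU tail $\bar F_{S_j}(x)=e^{-\sqrt x}$ one finds $\mathbb P(Y_1>\kappa Y_2)=1/(1+\sqrt\kappa)$ with $\kappa=r_1/r_2$, which is strictly smaller than $1/(1+\kappa)=r_2/(r_1+r_2)$ whenever $r_1<r_2$. So when the late-joining server is the faster one, the expected aggregate weighted work is locally \emph{decreasing} in the offset, and the monotonicity you aim to establish fails; your method can deliver the homogeneous-speed case but not the heterogeneous statement. The same example shows that the pointwise domination asserted in the paper's proof cannot hold for all $t$ in the heterogeneous case either, so this is a weak point of the lemma as stated rather than merely of your strategy; it should be flagged rather than patched by pushing the induction through.
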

\begin{proof}
Observe that for NWU distributions, Equation~\eqref{eq: thm2 ccdf awt} changes to
\begin{align*}
\bar{F}_{T_{\mathrm{awt},j}^{I}}(t) =
\begin{cases}
\bar{F}_{Y_{I_{1}j}}( t) \geq \bar{F}_{Y_{I_{1}j}}\left(\frac{r_{I_{1}j}t}{\sum_{i \in I} r_{ij}}\right) \cdots \bar{F}_{Y_{I_{d}j}}\left(\frac{r_{I_{d}j}t}{\sum_{i \in I} r_{ij}}\right) & \text{ for } 0 < t < b_{2}, \\
\bar{F}_{Y_{I_{1}j}}\left( b_{2}+\frac{r_{I_{1}j}(t-b_{2})}{r_{I_{1}j}+r_{I_{2}j}}\right) \cdot \bar{F}_{Y_{I_{2}j}}\left(\frac{r_{I_{2}j}(t-b_{2})}{r_{I_{1}j}+r_{I_{2}j}}\right) \\
\quad \geq \bar{F}_{Y_{I_{1}j}}\left(\frac{r_{I_{1}j}t}{\sum_{i \in I} r_{ij}}\right) \cdots \bar{F}_{Y_{I_{d}j}}\left(\frac{r_{I_{d}j}t}{\sum_{i \in I} r_{ij}}\right) & \text{ for }b_{2} < t < \sum_{l=1}^{2} (b_{3}-b_{l}), \\
\vdots \\
\bar{F}_{Y_{I_{1}j}}\left(b_{2}+\frac{r_{I_{1}j}2(b_{3}-b_{2})}{r_{I_{1}j}+r_{I_{2}j}}+ \cdots +\frac{r_{I_{1}j}\left(t-\sum_{l=1}^{d}(b_{d}-b_{l})\right)}{\sum_{i \in I} r_{ij}}\right) \\
\quad \cdots \bar{F}_{Y_{I_{d}j}}\left(\frac{r_{I_{d}j}(t-\sum_{l=1}^{d}(b_{d}-b_{l}))}{\sum_{i \in I} r_{ij}}\right) \\
\quad \geq \bar{F}_{Y_{I_{1}j}}\left(\frac{r_{I_{1}j}t}{\sum_{i \in I} r_{ij}}\right) \cdots \bar{F}_{Y_{I_{d}j}}\left(\frac{r_{I_{d}j}t}{\sum_{i \in I} r_{ij}}\right) & \text{ for } \sum_{l=1}^{d}(b_{d}-b_{l}) < t.
\end{cases}
\end{align*}
By definition of NWU distributions, 
\begin{align}
\sum_{i \in I} r_{ij} \mathbb{E}[T_{ij}] = \int_{t=0}^{\infty} \bar{F}_{T_{\mathrm{awt},j}^{I}}(t) \mathrm{d}t &\geq \int_{t=0}^{\infty} \bar{F}_{Y_{I_{1}j}}\left(\frac{r_{I_{1}j}t}{\sum_{i \in I} r_{ij}}\right) \cdots \bar{F}_{Y_{I_{d}j}}\left(\frac{r_{I_{d}j}t}{\sum_{i \in I} r_{ij}}\right) \mathrm{d}t \nonumber \\
&= \sum_{i \in I} r_{ij} \mathbb{E}\left[\min \left\{\frac{Y_{I_{1}j}}{r_{I_{1}j}},\dots, \frac{Y_{I_{d}j}}{r_{I_{d}j}}\right\}\right] = \sum_{i \in I} r_{ij} \theta_{ijh},
\label{eq: aggregate weight inequalities}
\end{align}
with $\boldsymbol{s}^{h} = I$.
This implies that the minimum expected aggregate weighted amount of time invested in the service of a job is achieved when all replicas start at exactly the same time. 
\end{proof}

Now observe that if $\mathbb{E}[T_{ij}] \geq \theta_{ijh}$ for all $i \in I = \boldsymbol{s}^{h}$, then similar arguments as in Theorem~\ref{thm: stability region d=N vs d=1} and Lemma~\ref{lem: underlying algebraic equation} would yield that the stability region for $d=N$ is larger than for any $1<d<N$ as well. Unfortunately, these detailed inequalities cannot be deduced from the aggregate weighted inequalities in \eqref{eq: aggregate weight inequalities} without further conditions. 
This leads to the next conjecture, which is also illustrated in Figure~\ref{fig: stability simulation and same starting times FS} for Weibull$(\lambda_{\mathrm{w}}=1.128,k=2)$ (NBU), exponential and Weibull$(\lambda_{\mathrm{w}}=0.5,k=0.5)$ (NWU) distributed speed variations.

\begin{figure}
  \centering
\resizebox{0.75\textwidth}{!}{\newcommand{\dataFigure}{TikZFigures/ExpectedLatency_vs_SameStartingTimes_FS.csv}

\begin{tikzpicture}
		\begin{axis}[
			xlabel=$\lambda$,
			ylabel=Expected latency,
			ymin=0,
			ymax=20,
			no markers,
			legend pos= outer north east,
			legend cell align=left]
		\addlegendimage{empty legend}
		\addplot+ table [x=lambda,y=WeibullNBU, col sep=comma]{\dataFigure};
		\addplot+ table [x=lambda,y=Exp, col sep=comma]{\dataFigure};
		\addplot+ table [x=lambda,y=WeibullNWU, col sep=comma]{\dataFigure};
		
		\pgfplotsset{cycle list shift=-3}
		\addplot+[dashed] coordinates {(1.405,0) (1.405,20)};
		\addplot+[dashed] coordinates {(1.929,0) (1.929,20)};
		\addplot+[dashed] coordinates {(3.793,0) (3.793,20)};
		\addlegendentry{Distribution:}
		\addlegendentry{Weibull (NBU)}
		\addlegendentry{Exponential}
		\addlegendentry{Weibull (NWU)}
		\end{axis}
\end{tikzpicture}}
\caption{Expected latency for the scenario of Example~\ref{exam: slow fast server unknown job types} with $N=3$ servers, $d=2$ replicas and $r_{\mathrm{slow}}=0.5$ for various distributions for the speed variation with $\mathbb{E}[S_{j}]=1$ and $\mathbb{E}[X_{j}]=1$ for $j=1,2$. Assignment $\tilde{p}_{k}=\frac{1}{3}$, for $k=1,2,3$, which is optimal. The dashed lines represent the stability condition for the load at server $i$ equal to $\gamma_{i}$, for $i=1,2,3$.}
\label{fig: stability simulation and same starting times FS}
\end{figure}
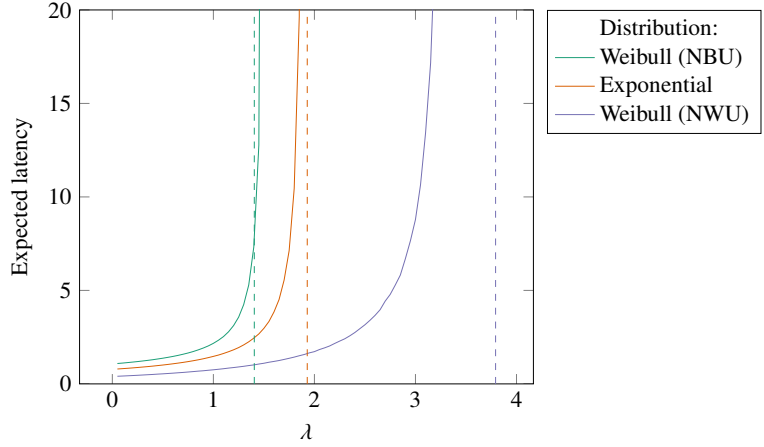

\begin{comment}
\begin{figure}
  \centering
\resizebox{0.75\textwidth}{!}{\input{TikZFigures/ExpectedLatency_vs_SameStartingTimes_H.tex}}
\caption{Expected latency for the scenario of $N=3$ heterogeneous servers, $d=2$ replicas and $r=(1,0.5,0.25)$ for various distributions for the speed variation with $\mathbb{E}[S]=1$ and $\mathbb{E}[X]=1$. Assignment $\tilde{p}_{k}=\frac{1}{3}$, for $k=1,2,3$, which is not optimal. The dashed lines represent the stability condition for the load at server $i$ equal to $\gamma_{i}$, for $i=1,2,3$.}
\label{fig: stability simulation and same starting times H}
\end{figure}
\end{comment}

\begin{conjecture}
\label{conj: same starting times better}
For (strictly) NWU distributed speed variations and unknown job types, the load at server $i$ of the system with $1 < d < N$ replicas, denoted by $\bar{\gamma}_{i}$, is bounded by $\bar{\gamma}_{i} \geq (>) \gamma_{i}$, for all $i=1,\dots,N$. 
\end{conjecture}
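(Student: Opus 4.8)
\emph{Strategy.}
The plan is to write the steady-state load at server~$i$ in the same averaged form as the proxy $\gamma_i$ and then compare the two term by term. Writing $q_i = \sum_{h : i \in \boldsymbol{s}^h} \tilde{p}_h$ for the probability that an arbitrary job is replicated to a set containing server~$i$, and $\bar{\theta}_{ijh} = \mathbb{E}[T_{ij}]/\mathbb{E}[X_j]$ for the actual expected execution time per unit size that server~$i$ spends on a type-$j$ replica assigned to the set $\boldsymbol{s}^h \ni i$ under the queueing-induced start times, the true load is $\bar{\gamma}_i = \sum_{j=1}^{M} p_j \mathbb{E}[X_j] \sum_{h : i \in \boldsymbol{s}^h} \tfrac{\tilde{p}_h}{q_i} \bar{\theta}_{ijh}$, which is precisely $\gamma_i$ with each simultaneous-start value $\theta_{ijh}$ replaced by $\bar{\theta}_{ijh}$. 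Hence it suffices to show that the $\tfrac{\tilde{p}_h}{q_i}$-weighted average over $h \ni i$ and the $p_j \mathbb{E}[X_j]$-weighted average over $j$ of $\bar{\theta}_{ijh} - \theta_{ijh}$ is nonnegative (positive in the strict case). First I would condition on the vector of start offsets $\boldsymbol{B}$ within $\boldsymbol{s}^h$, which is generated by the queueing of earlier jobs and is therefore independent of the intrinsic size and speed variations of the tagged job, so that the busy time of server~$i$ equals $(\tau - B_i)^+$ with $\tau$ the global completion epoch, and the comparison can be carried out pathwise in $\boldsymbol{B}$ and then integrated.

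\emph{Provable ingredients.}
Two lower bounds are available. On the event that server~$i$ is the earliest of $\boldsymbol{s}^h$ to start the tagged job, its busy time equals $\tau - B_i$ with $\tau = \min_{l \text{ started}}(B_l + X_j Y_{lj}/r_{lj})$, and since $B_i \le B_l$ for all $l \in \boldsymbol{s}^h$ this is at least $X_j \min_{l \in \boldsymbol{s}^h} Y_{lj}/r_{lj}$ pathwise; as $\boldsymbol{B}$ is independent of $(X_j, Y_{\cdot j})$ it follows that $\mathbb{E}[T_{ij} \mid i \text{ earliest}] \geq \mathbb{E}[X_j]\theta_{ijh}$, so an early-starting server always overshoots the simultaneous-start value, which is the mechanism behind the claim. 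At the aggregate level, Lemma~\ref{lem: expected aggregate weighted amount of time} supplies the full speed-weighted bound, which in the present notation reads $\sum_{i \in \boldsymbol{s}^h} r_{ij} \bar{\theta}_{ijh} \geq \theta_{ijh} \sum_{i \in \boldsymbol{s}^h} r_{ij}$. These two facts already settle the homogeneous, symmetric case: when $r_{ij}$ does not depend on~$i$ and the $\tilde{p}_h$ are exchangeable over servers, speed-weighted and unweighted sums coincide, so the lemma upgrades to $\sum_{i \in \boldsymbol{s}^h} \bar{\theta}_{ijh} \geq d\, \theta_{ijh}$, whence $\sum_{i} q_i \bar{\gamma}_i \geq \sum_i q_i \gamma_i$, and symmetry forces $\bar{\gamma}_1 = \dots = \bar{\gamma}_N$ and $\gamma_1 = \dots = \gamma_N$, yielding $\bar{\gamma}_i \geq \gamma_i$.

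\emph{Main obstacle.}
The crux is to pass from these aggregate, speed-weighted statements to the per-server, unweighted statement in the heterogeneous or asymmetric case. The pointwise inequality $\bar{\theta}_{ijh} \geq \theta_{ijh}$ is simply false for a server that tends to start last in $\boldsymbol{s}^h$: if it joins the tagged job late, the replica may already have completed and its busy time falls short of $\theta_{ijh}$, exactly as the remark following Lemma~\ref{lem: expected aggregate weighted amount of time} anticipates. One must therefore show that, after averaging over the assignments $h \ni i$ and the types, the surplus contributed when server~$i$ starts early dominates the deficit incurred when it starts late. This requires control of the joint law of the start offsets $\boldsymbol{B}$ induced by the stationary dynamics, in particular of how strongly server~$i$'s relative start position is correlated with the speeds $r_{ij}$ across the different sets $\boldsymbol{s}^h$ that contain it; and it is precisely this correlation that is analytically intractable for $1 < d < N$, since the offsets admit no closed form and Lemma~\ref{lem: expected aggregate weighted amount of time} only pins down their speed-weighted aggregate effect rather than its allocation across individual servers. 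I would attack the remaining gap through a sample-path coupling that postpones the later starters to the latest start epoch and invokes NWU monotonicity in the spirit of the ccdf comparison~\eqref{eq: thm2 ccdf awt} to bound each server's busy time from below, or alternatively through a server-level rate-conservation argument; but upgrading either from the crude factor-$1/d$ per-server bound to the full per-server bound in the presence of heterogeneous speeds is the decisive difficulty, and is the reason the statement is posed here only as a conjecture.
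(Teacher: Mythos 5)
This statement is posed in the paper as a \emph{conjecture}: the paper offers no proof of it, only supporting evidence (Lemma~\ref{lem: expected aggregate weighted amount of time}, the discussion immediately following it, and the simulation in Figure~\ref{fig: stability simulation and same starting times FS}). Your proposal likewise does not prove it, and you say so explicitly, so there is no false claim to flag. More importantly, the obstacle you isolate is exactly the one the authors themselves point to: Lemma~\ref{lem: expected aggregate weighted amount of time} only yields the speed-weighted aggregate inequality $\sum_{i \in \boldsymbol{s}^{h}} r_{ij} \mathbb{E}[T_{ij}] \geq \sum_{i \in \boldsymbol{s}^{h}} r_{ij} \theta_{ijh}$, and the paper states that the per-server inequalities $\mathbb{E}[T_{ij}] \geq \mathbb{E}[X_j]\theta_{ijh}$ ``cannot be deduced from the aggregate weighted inequalities without further conditions.'' Your observation that the pointwise bound genuinely fails for a server that tends to start last (its replica may be cancelled before contributing $\theta_{ijh}$ worth of work) is the correct reason the aggregate bound does not localize, and your proposed remedies (coupling the late starters to a common start epoch, or a rate-conservation argument) are plausible directions but are not carried out, so the conjecture remains open in your write-up just as in the paper.

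Two smaller points. First, your reduction of the true load $\bar{\gamma}_i$ to the form $\sum_j p_j \mathbb{E}[X_j] \sum_{h : i \in \boldsymbol{s}^h} \frac{\tilde{p}_h}{q_i} \bar{\theta}_{ijh}$ and the conditioning on the start-offset vector $\boldsymbol{B}$ (independent of the tagged job's size and speed variations under FCFS) are sound and give a cleaner frame for the problem than the paper provides. Second, your resolution of the homogeneous, exchangeable case --- where speed-weighting is vacuous, so Lemma~\ref{lem: expected aggregate weighted amount of time} gives $\sum_{i \in \boldsymbol{s}^h} \bar{\theta}_{ijh} \geq d\,\theta_{ijh}$ and symmetry forces all $\bar{\gamma}_i$ (and all $\gamma_i$) to coincide, hence $\bar{\gamma}_i \geq \gamma_i$ --- is correct and is a genuine, if modest, partial result: it establishes the conjecture in precisely the identical-server, uniform-assignment setting that Conjecture~\ref{conj: stability region d=N vs arbitrary d} needs, which the paper only asserts conditionally on Conjecture~\ref{conj: same starting times better}. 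The general heterogeneous case, where server~$i$'s relative start position can correlate with its speeds across the sets $\boldsymbol{s}^h \ni i$, is left open by both you and the authors.
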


In Figure~\ref{fig: stability simulation and same starting times FS} it can be seen that Conjecture~\ref{conj: same starting times better} cannot be extended to NBU distributed speed variations, i.e., for the NBU Weibull distribution the stability condition of the original system (the expected latency is depicted with a solid lime green line) seems tighter than the stability condition in the system where all the $d$ replicas were to start at the same time (dashed lime green line). 

\begin{conjecture}
\label{conj: stability region d=N vs arbitrary d}
In the case of unknown job types, the stability region for $d=N$ when the job types are unknown is larger than or equal to (respectively, strictly larger than) the stability region for $1 \leq d < N$ with identical servers and NWU (respectively, strictly NWU) distributed speed variations where replicas are assigned to $d$ servers selected uniformly at random (without replacement). 
\end{conjecture}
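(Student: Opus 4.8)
The plan is to reduce the comparison of stability regions to a comparison of per-server loads, exploiting the full permutation symmetry of the setting. With identical servers and replicas assigned to $d$ servers chosen uniformly at random, the dynamics are invariant under relabelling of the servers, so in steady state the long-run fraction of time that each server is busy is the same across all $N$ servers. Writing this common busy fraction as the per-server load, stability of the $d$-system necessarily requires this load to be at most~$1$, while for $d=N$ the system is an $M/G/1$ queue with exact stability threshold $\lambda^{*}=1/\gamma_{0}$, see~\eqref{eq: stability condition d=N no job types}. Hence it suffices to show that any $\lambda$ keeping the $d$-system's per-server load below~$1$ also satisfies $\lambda<1/\gamma_{0}$. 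First I would establish a lower bound on the true per-server load for $1\le d<N$; second I would show this lower bound already controls $\lambda\gamma_{0}$ via the $G_{j}$ monotonicity for NWU distributions; chaining the two gives the claim. Throughout I normalise the common server speed to one, so that $\gamma_{0}=\sum_{j}p_{j}\mathbb{E}[X_{j}]\,\mathbb{E}[\min\{Y_{1j},\dots,Y_{Nj}\}]$.

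For the lower bound, the idea is to sidestep the per-server inequality underlying Conjecture~\ref{conj: same starting times better} (which the paper flags as not deducible from the aggregate bound) and instead work with \emph{totals}, where symmetry closes the gap. Consider a tagged type-$j$ job assigned to a set $I$ of $d$ servers and condition on the queue-induced start-time offsets of its replicas; these are determined by jobs ahead and are therefore nonnegative and independent of the tagged job's own speed variations. For each realisation of the offsets, Lemma~\ref{lem: expected aggregate weighted amount of time} gives that the aggregate weighted service time of the tagged job is at least its value when all $d$ replicas start simultaneously, and the latter quantity does not depend on the offsets; integrating over the offsets and averaging over the uniform choice of $I$ yields that the expected total busy time the servers accrue per type-$j$ arrival is at least $d\,\mathbb{E}[X_{j}]\,\mathbb{E}[\min\{Y_{1j},\dots,Y_{dj}\}]$. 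Summing over types and multiplying by $\lambda$ gives a lower bound $\lambda d\gamma^{(d)}$ on the total busy fraction across all servers, where $\gamma^{(d)}=\sum_{j}p_{j}\mathbb{E}[X_{j}]\,\mathbb{E}[\min\{Y_{1j},\dots,Y_{dj}\}]$ (so that $\gamma^{(N)}=\gamma_{0}$); dividing by $N$ and invoking symmetry bounds each server's load from below by $\tfrac{\lambda d}{N}\gamma^{(d)}$.

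It then remains to compare $\tfrac{d}{N}\gamma^{(d)}$ with $\gamma_{0}$. Per job type this reduces to $d\,\mathbb{E}[\min\{Y_{1j},\dots,Y_{dj}\}]\ge N\,\mathbb{E}[\min\{Y_{1j},\dots,Y_{Nj}\}]$, i.e.\ $G_{j}(d)\ge G_{j}(N)$, which holds because $G_{j}$ is decreasing for NWU distributions, see the discussion following Example~\ref{exam: homogeneous servers}, and strictly decreasing in the strictly NWU case. Consequently $\tfrac{d}{N}\gamma^{(d)}\ge\gamma_{0}$, strictly for strictly NWU. Any $d$-stable $\lambda$ therefore obeys $\tfrac{\lambda d}{N}\gamma^{(d)}\le 1$, hence $\lambda\le 1/(\tfrac{d}{N}\gamma^{(d)})\le 1/\gamma_{0}$; as the $d$-stability region is an interval $[0,\lambda_{d}^{*})$ this forces $\lambda_{d}^{*}\le 1/\gamma_{0}$, so $[0,\lambda_{d}^{*})\subseteq[0,1/\gamma_{0})$, the $d=N$ region, with the inclusion strict in the strictly NWU case since then $1/(\tfrac{d}{N}\gamma^{(d)})<1/\gamma_{0}$. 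This is exactly the asserted ordering.

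The main obstacle I anticipate is the rigorous transfer of the static, single-job inequality of Lemma~\ref{lem: expected aggregate weighted amount of time} into the dynamic stationary system. This requires justifying that the tagged job's random start offsets are indeed nonnegative and independent of its own speeds $Y_{ij}$, so that the conditioning-and-integrating step is valid, and that the relevant long-run averages (per-server busy fractions and total work rate) exist and obey the rate-conservation identity that makes ``per-server load at most~$1$'' a genuine necessary condition for stability — points one should treat with the same care as the limsup caveat in Proposition~\ref{propo: prob allocation}. The essential simplification relative to the general Conjecture~\ref{conj: same starting times better} is that identical servers together with uniform random assignment force the load to be equal across servers, which lets the weak aggregate bound of Lemma~\ref{lem: expected aggregate weighted amount of time} substitute for the per-server inequality that is unavailable in the heterogeneous case.
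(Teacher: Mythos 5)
Your argument is correct in its essentials, and it takes a genuinely different --- and in fact stronger --- route than the paper: the statement is only a \emph{conjecture} there, supported by a conditional argument that presupposes Conjecture~\ref{conj: same starting times better} (a per-server load bound $\bar{\gamma}_{i} \geq \gamma_{i}$ which the paper explicitly notes cannot be deduced from the aggregate inequalities in~\eqref{eq: aggregate weight inequalities}) and then combines Lemma~\ref{lem: underlying algebraic equation} with the monotonicity of $G_{j}$. Your key observation is that in the identical-server, uniform-assignment setting the per-server inequality is not needed at all: exchangeability of the dynamics under relabelling of the servers forces the stationary busy fractions to coincide, so the \emph{aggregate} bound of Lemma~\ref{lem: expected aggregate weighted amount of time} --- applied conditionally on the queue-induced start offsets, which are determined by earlier jobs and hence independent of the tagged job's own speed variations, and then integrated --- already yields the per-server lower bound $\tfrac{\lambda d}{N}\gamma^{(d)}$ through the same rate-conservation identity $\tau_{ij} = \lambda p_{j} \mathbb{E}[T_{ij}]$ that the paper uses in Theorems~\ref{thm: maximum stability d=1 identical} and~\ref{thm: maximum stability d=1 iid NBU}. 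Your final comparison $\tfrac{d}{N}\gamma^{(d)} \geq \gamma_{0}$, i.e.\ $G_{j}(d) \geq G_{j}(N)$ for NWU distributions, is exactly the computation the paper performs at the end of its supporting discussion, and your necessary-condition framing (any $\lambda$ stable for $d$ must satisfy $\lambda < 1/\gamma_{0}$, which is the exact threshold~\eqref{eq: stability condition d=N no job types} for $d=N$) cleanly delivers both the weak and the strict inclusion. In effect you prove, via symmetry, precisely the special case of Conjecture~\ref{conj: same starting times better} that is needed here, thereby upgrading the conjecture to a theorem in this symmetric setting. The caveats you flag (existence of the long-run averages, independence of the offsets from the tagged job's $Y_{ij}$) are genuine but are on the same footing as the paper's own informal treatment in Proposition~\ref{propo: prob allocation} and Theorem~\ref{thm: maximum stability d=1 iid NBU}; none of them breaks the argument.
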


Conjecture~\ref{conj: stability region d=N vs arbitrary d} is supported by the observation that Conjecture~\ref{conj: same starting times better} implies 
\begin{align*}
\max_{i \in \mathcal{N}} \bar{\gamma}_{i} \sum_{h : i \in \boldsymbol{s}^{h}} \tilde{p}_{h} \geq \max_{i \in \mathcal{N}} \gamma_{i} \sum_{h : i \in \boldsymbol{s}^{h}} \tilde{p}_{h},
\end{align*}
while Lemma~\ref{lem: underlying algebraic equation} gives $\hat{\gamma}_{0} \leq \max_{i \in \mathcal{N}} \hat{\gamma}_{i} \sum_{h : i \in \boldsymbol{s}^{h}} \tilde{p}_{h}$. If Conjecture~\ref{conj: same starting times better} is true, it would thus suffice to establish the equivalence relation 
\begin{align}
&\frac{1}{\gamma_{1}} + \dots + \frac{1}{\gamma_{N}} \leq \frac{d}{\gamma_{0}} \Leftrightarrow  \frac{1}{\hat{\gamma}_{1}} + \dots + \frac{1}{\hat{\gamma}_{N}} \leq \frac{d}{\hat{\gamma}_{0}}.
\label{eq: equivalence relation conjecture}
\end{align}

For identical servers with uniform selection of the servers we have that
\begin{align*}
\gamma &= \gamma_{1} = \dots = \gamma_{N} = \sum_{j=1}^{M} p_{j} \mathbb{E}[X_{j}] \sum_{h: i \in \boldsymbol{s}^{h}} \frac{\theta_{ijh}}{d}, \\
\hat{\gamma} &= \hat{\gamma}_{1} = \dots = \hat{\gamma}_{N} = \sum_{j=1}^{M} p_{j} \mathbb{E}[X_{j}] \sum_{h: i \in \boldsymbol{s}^{h}} \frac{\hat{\theta}_{ijh}}{d}.
\end{align*}
Substituting these in Equation~\eqref{eq: equivalence relation conjecture} gives
\begin{align*}
\frac{N}{\gamma} \leq \frac{d}{\gamma_{0}} \Leftrightarrow \frac{N}{\hat{\gamma}} \leq \frac{d}{\hat{\gamma}_{0}},
\end{align*}
or equivalently
\begin{align*}
\frac{\gamma}{N} \geq \frac{\gamma_{0}}{d} \Leftrightarrow \frac{\hat{\gamma}}{N} \geq \frac{\hat{\gamma}_{0}}{d}.
\end{align*}
If we look at the difference, we get
\begin{align}
\left(\frac{\hat{\gamma}}{N} - \frac{\gamma}{N} \right) - \left( \frac{\hat{\gamma}_{0}}{d} - \frac{\gamma_{0}}{d} \right).
\label{eq: difference term conjecture}
\end{align}
Now observe that $\frac{\hat{\gamma}}{N} = \frac{\hat{\gamma}_{0}}{d}$ and therefore the term in Equation~\eqref{eq: difference term conjecture} simplifies to
\begin{align*}
\frac{\gamma_{0}}{d} - \frac{\gamma}{N}.
\end{align*}
The last expression is negative since for NWU distributions, see for example~\cite[Sec.~1.6]{S-CMQST}, we have
\begin{align*}
N \theta_{j} \leq d \sum_{h: i \in \boldsymbol{s}^{h}} \frac{\theta_{ijh}}{d}, 
\end{align*}
for all $j = 1,2,\dots,M$.\\

\begin{comment}
Extending Conjecture~\ref{conj: stability region d=N vs arbitrary d} to all scenarios for the server speeds seems hard. Observe that the crucial part in our proof techniques is proving the equivalence with Equation~\eqref{eq: equation z gamma}, which in the case of general server speeds should be done without an explicit expression for the probabilities $\tilde{p}_{h}$.\\
\end{comment}

In the next subsection we will show that even for NBU distributed speed variations full replication may give the largest stability region when job types cannot be observed. This demonstrates that unpredictability in speeds induced by uncertainty in job types can create a strong rationale for replication, even when the random speed variations do not. 
More specifically, we give examples illustrating that the number of replicas that yields the largest stability region depends on the server speeds. 

\subsection{Full replication may be best for NBU speed variations}
In Section~\ref{sec: stability NBU distribution} we proved that the stability region is largest for $d=1$ when the speed variations are NBU and job types can be distinguished. We now show that the complete opposite may be true when job types cannot be observed. More specifically, we will prove that even with NBU random speed variations in some scenarios full replication gives the largest stability region when the uncertainty in the systematic speed variations is sufficiently significant in some suitable sense.

Consider the scenario where job type $j$, for $j=1,\dots,N$, is fast on server $j$, i.e., server speed $r_{\mathrm{fast}}$, and slow on the other servers, i.e., server speed $r_{\mathrm{slow}}$ (see Example~\ref{exam: slow fast server unknown job types} with $N=2$ servers, $r_{\mathrm{fast}}=1$ and $r_{\mathrm{slow}}=x$). We refer to this scenario as the \textit{FS} (Fast-Slow) scenario. 

\begin{theorem}
\label{thm: maximum stability d=N iid NBU extreme slow}
In the case of unknown job types, the stability region for $d=N$ is larger than the stability region for $d<N$ in the FS scenario with NBU distributed speed variations and static probabilistic assignment (which cannot depend on the job type) of the $d$ replicas, when the ratio $\frac{r_{\mathrm{slow}}}{r_{\mathrm{fast}}}=x \downarrow 0$.
\end{theorem}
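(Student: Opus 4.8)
The plan is to bracket the two thresholds: show that the exact threshold for $d=N$ is bounded below by a positive constant that does not depend on $x$, while every threshold for $d<N$ is bounded above by a quantity of order $x$ that vanishes as $x \downarrow 0$.

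First I would treat $d=N$, which is exact because all replicas start simultaneously. By~\eqref{eq: stability condition d=N no job types} the threshold is $1/\gamma_0$ with $\gamma_0 = \sum_j p_j \mathbb{E}[X_j] \theta_j$. Since in the FS scenario $r_{jj} = r_{\mathrm{fast}}$, the minimum defining $\theta_j$ is at most $Y_{jj}/r_{\mathrm{fast}}$, so $\theta_j \leq \mathbb{E}[S_j]/r_{\mathrm{fast}}$ and hence $\gamma_0 \leq \frac{1}{r_{\mathrm{fast}}} \sum_j p_j \mathbb{E}[X_j] \mathbb{E}[S_j]$ for every $x$. This gives the uniform lower bound $\lambda^{*}_{N} \geq r_{\mathrm{fast}} / \sum_j p_j \mathbb{E}[X_j] \mathbb{E}[S_j] > 0$, independent of $x$.

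The main work is the upper bound for $d<N$. Fix any static assignment $\tilde{p}_h$ over the $K = \binom{N}{d}$ server sets $\boldsymbol{s}^h$. Because each $\boldsymbol{s}^h$ has size $d < N = M$, it omits at least one type; I call a job of type $j$ with $j \notin \boldsymbol{s}^h$ assigned to $\boldsymbol{s}^h$ a \emph{misassigned} job, and note that it runs only on slow servers (all speeds equal to $r_{\mathrm{slow}} = x\, r_{\mathrm{fast}}$). Applying the NBU aggregate-weighted-work inequality~\eqref{eq: thm2 expected aggregated weighted time} from the proof of Theorem~\ref{thm: maximum stability d=1 iid NBU} to the set $I = \boldsymbol{s}^h$ yields $\sum_{i \in \boldsymbol{s}^h} r_{ij} \mathbb{E}[T_{ij}^{(h)}] \geq \mathbb{E}[X_j] \mathbb{E}[S_j]$, where $T_{ij}^{(h)}$ denotes the time server~$i$ spends on a type-$j$ job assigned to $\boldsymbol{s}^h$; for a misassigned job this gives $\sum_{i \in \boldsymbol{s}^h} \mathbb{E}[T_{ij}^{(h)}] \geq \mathbb{E}[X_j] \mathbb{E}[S_j] / (x\, r_{\mathrm{fast}})$, which diverges as $x \downarrow 0$. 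Writing the total occupancy as $\sum_i \tau_i = \lambda \sum_h \tilde{p}_h \sum_j p_j \sum_{i \in \boldsymbol{s}^h} \mathbb{E}[T_{ij}^{(h)}]$ and keeping only the misassigned terms, I obtain $\sum_i \tau_i \geq \frac{\lambda}{x\, r_{\mathrm{fast}}} \sum_h \tilde{p}_h \sum_{j \notin \boldsymbol{s}^h} p_j \mathbb{E}[X_j] \mathbb{E}[S_j] \geq \frac{\lambda\, w_{\min}}{x\, r_{\mathrm{fast}}}$, where $w_{\min} = \min_j p_j \mathbb{E}[X_j] \mathbb{E}[S_j] > 0$, using that every $\boldsymbol{s}^h$ omits some type and $\sum_h \tilde{p}_h = 1$. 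Since stability forces $\sum_i \tau_i < N$, this gives $\lambda < N\, x\, r_{\mathrm{fast}} / w_{\min}$. As this bound holds for every assignment and every $d < N$, it follows that $\lambda^{*}_{d} \leq N\, x\, r_{\mathrm{fast}} / w_{\min}$ for all $d < N$.

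Combining the two estimates, for every $x < w_{\min} / \big( N \sum_j p_j \mathbb{E}[X_j] \mathbb{E}[S_j] \big)$ the upper bound for $d < N$ falls strictly below the lower bound for $d = N$, proving the claim. The hard part is the uniform load lower bound: I must verify that~\eqref{eq: thm2 expected aggregated weighted time} applies per assignment set $\boldsymbol{s}^h$ and to the slow subset (this is fine, since the derivation in Theorem~\ref{thm: maximum stability d=1 iid NBU} is carried out for an arbitrary set $I$ of $d$ servers and arbitrary start-time offsets $b_i$), and that the combinatorial fact that a $d$-set with $d < N$ omits some type yields a positive constant $w_{\min}$ that is independent of the (possibly adversarially chosen) assignment. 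The remaining identity $\sum_i \tau_i = \lambda \sum_h \tilde{p}_h \sum_j p_j \sum_{i \in \boldsymbol{s}^h} \mathbb{E}[T_{ij}^{(h)}]$ is a routine work-conservation bookkeeping step.
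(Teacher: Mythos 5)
Your proposal is correct and follows essentially the same route as the paper's (much terser) proof: the $d=N$ threshold is bounded below uniformly in $x$ because each type has one fast server, while for $d<N$ every assignment set of $d$ servers omits some type's fast server, so a positive fraction of jobs lands entirely on slow servers and forces the sustainable arrival rate to vanish like $x$. The only real difference is cosmetic: you invoke the NBU aggregate-weighted-work inequality \eqref{eq: thm2 expected aggregated weighted time} where the paper uses the distribution-free bound $\mathbb{E}[B^{*}_{j}] \geq \mathbb{E}[X_{j}]\,\mathbb{E}[\min\{Y_{1j},\dots,Y_{dj}\}]/x$, and your write-up makes quantitative and explicit the conclusion that the paper's two-line sketch leaves implicit.
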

\begin{proof}
Note that the stability region for $d=N$, given by Equation~\eqref{eq: stability condition d=N no job types}, does not depend on the value of $x$.
Now, for $d<N$, the probability of assigning all replicas of a type-$j$ job to slow servers is strictly larger than $0$. For the expected service requirement of this job, denoted by $\mathbb{E}[B^{*}_{j}]$, it follows that $\mathbb{E}[B^{*}_{j}] \geq \frac{\mathbb{E}[X_{j}]\mathbb{E}[\min\{Y_{1j},\dots,Y_{dj}\}]}{x}$. 
\end{proof}

\begin{figure}
  \centering
\resizebox{\textwidth}{!}{\newcommand{\dataFigureA}{TikZFigures/ExpectedLatency_Weibull_k_2_rslow_01.csv}
\newcommand{\dataFigureB}{TikZFigures/ExpectedLatency_Weibull_k_2_rslow_05.csv}

\begin{tabular}{@{}cc@{}}
\begin{tikzpicture}
		\begin{axis}[
			xlabel=$\lambda$,
			ylabel=Expected latency,
			ymin=0,
			ymax=20,
			no markers]
		\addplot table [x=lambda,y=d1, col sep=comma]{\dataFigureA};
		\addplot table [x=lambda,y=d2, col sep=comma]{\dataFigureA};
		\addplot table [x=lambda,y=d3, col sep=comma]{\dataFigureA};
		\end{axis}
\end{tikzpicture}
&
\begin{tikzpicture}
		\begin{axis}[
			xlabel=$\lambda$,
			ylabel=Expected latency,
			ymin=0,
			ymax=20,
			no markers,
			legend pos= outer north east,
			legend cell align=left]
		\addlegendimage{empty legend}
		\addplot table [x=lambda,y=d1, col sep=comma]{\dataFigureB};
		\addplot table [x=lambda,y=d2, col sep=comma]{\dataFigureB};
		\addplot table [x=lambda,y=d3, col sep=comma]{\dataFigureB};
		\addlegendentry{Replicas:}
		\addlegendentry{$d=1$}
		\addlegendentry{$d=2$}
		\addlegendentry{$d=N=3$}
		\end{axis}
\end{tikzpicture}
\end{tabular}}
%\resizebox{0.49\textwidth}{!}{\input{TikZFigures/ExpectedLatency_Weibull_k=2_rslow=05.tex}}
\caption{Expected latency for the scenario of Example~\ref{exam: slow fast server unknown job types} with $N=3$ servers and $r_{\mathrm{rslow}}=0.1$ (left) and $r_{\mathrm{rslow}}=0.5$ (right), where the speed variations are Weibull$(\lambda_{\mathrm{w}}=1.128,k=2)$ (NBU) distributed and $\mathbb{E}[X]=1$.}
\label{fig: weibull k=2}
\end{figure}
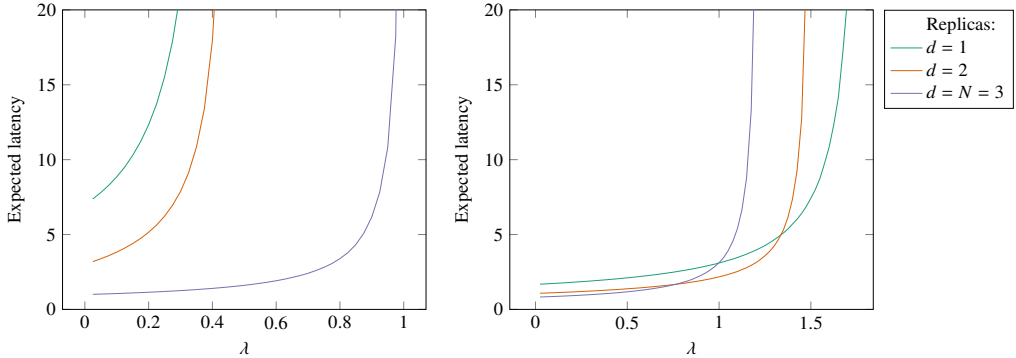

In Figure~\ref{fig: weibull k=2} (right) it can be seen that for $r_{\mathrm{slow}}$ sufficiently large $d=1$ gives the largest stability region in the special case of Weibull$(\lambda_{\mathrm{w}}=1.128,k=2)$ distributed speed variations, which belongs to the class of NBU distributions. As stated in Theorem~\ref{thm: maximum stability d=N iid NBU extreme slow} for $r_{\mathrm{slow}}$ sufficiently small we observe that $d=N=3$ gives the largest stability region. 

In \cite{AAJV-IPHDR} a similar result for the processor-sharing discipline is proved. For this discipline it is shown that redundancy can improve the stability of the system with identical replicas if the servers are sufficiently heterogeneous when the assignment probabilities are restricted to be uniform. 

\section{Conclusion and suggestions for further research}
\label{sec: conclusion}
We have proven that for c.o.c.\ redundancy scheduling with identical replicas, general job size distributions and suitable type-dependent assignment probabilities the stability region for $d=1$ is strictly larger than the stability region for $d>1$. Moreover, we established that the same statement holds in case of i.i.d.\ replicas and NBU distributed speed variations. For both identical and i.i.d.\ replicas a critical assumption is that the job types can be observed. In case of non-observable job types the stability region for $d=N$ is larger than or equal to the stability region for $d=1$ when the speed variations are NWU distributed. Under the conjecture that the stability region increases in the latter case when all replicas start at the same time, we extended the above-mentioned statement, i.e., we showed that for identical servers the stability region for $d=N$ is larger than or equal to the stability region for all $d < N$.   

In case the type identities of jobs are unknown, it may be possible to learn them, and for further research we intend to analyze the stability region when we are able to learn the job types; cf. \cite{BM-LHSS} where a learning framework is proposed to answer these questions for a different model. Ultimately, we hope to quantify the performance loss in terms of the stability region when the job types are unknown beforehand and explore how decreasing the uncertainty about the job types can increase the stability region.

\section*{Acknowledgments}
\label{sec: acknowlegdements}
The work in this paper is supported by the Netherlands Organisation for Scientific Research (NWO) through Gravitation grant NETWORKS 024.002.003.
The authors gratefully acknowledge several helpful discussions with Onno Boxma.

\bibliographystyle{plain}
%\bibliography{references}

%\begin{comment}

%\end{comment}

\begin{comment}
\section{Numerical results}
\label{app sec: numerical results}
\begin{figure}[H]
  \centering
\resizebox{0.5\textwidth}{!}{\input{TikZFigures/ExpectedLatency_Weibull_k=05_rslow=05.tex}}
\caption{Expected latency for the scenario of Example~\ref{exam: slow fast server unknown job types} with $N=3$ servers and $r_{\mathrm{rslow}}=0.5$, where the speed variations are Weibull$(\lambda=0.5,k=0.5)$ (NWU) distributed with $\mathbb{E}[X]=1$.}
\label{fig: weibull k=05 rslow=05}
\end{figure}

In Figure~\ref{fig: weibull k=05 rslow=05} it can be seen that $d=N=3$ indeed gives the largest stability region in the special case of Weibull distributed speed variations, which belongs to the class of NWU distributions. 
\end{comment}

\end{document}